\documentclass{amsart}
\usepackage[utf8]{inputenc}
\usepackage[T1]{fontenc}
\usepackage{amsmath}
\usepackage{amsfonts}
\usepackage{amssymb}
\usepackage{bbm}
\usepackage{tikz}
\usepackage{color}
\newcommand\CG[1]{\textcolor{red}{#1}}

\def\R{{\mathbb R}}
\def\C{{\mathbb C}}

\def\<{\langle}
\def\>{\rangle}

\def\0{\underline 0}

\def\1{\underline 1}

\newcommand{\bel}{\begin{equation}\label}
\newcommand{\ee}{\end{equation}}

\newtheorem{lemma}{Lemma}

\newtheorem{example}{Example}
\newtheorem{proposition}{Proposition}
\newtheorem{theorem}{Theorem}
\newtheorem{remark}{Remark}

\newtheorem{corollary}{Corollary}

\begin{document}
\title[Kummer and gamma laws through independences on trees ]{Kummer and gamma laws through independences on trees --- another parallel with the Matsumoto--Yor property}
\author{Agnieszka Piliszek, Jacek Weso\l owski}
\address{Agnieszka Piliszek\\
Wydzia{\l} Matematyki i Nauk Informacyjnych\\
Politechnika Warszawska\\
Koszykowa 75\\
00-662  Warszawa, Poland}
\email{A.Piliszek@mini.pw.edu.pl}
\address{Jacek Weso{\l}owski\\
Wydzia{\l} Matematyki i Nauk Informacyjnych\\
Politechnika Warszawska\\
Koszykowa 75\\
00-662  Warszawa, Poland}
\email{wesolo@mini.pw.edu.pl}
\begin{abstract}
The paper develops a rather unexpected parallel to the multivariate Matsumoto--Yor (MY) property on trees considered in \cite{MW04}. The parallel concerns a multivariate version of the Kummer distribution, which is generated by a tree. Given a tree of size $p$, we direct it by choosing a vertex, say $r$, as a root. With such a directed tree we associate a map $\Phi_r$. For a random vector ${\bf S}$ having a $p$-variate tree-Kummer distribution and any root $r$, we prove that $\Phi_r({\bf S})$ has independent components. Moreover, we show that if ${\bf S}$ is a random vector in $(0,\infty)^p$ and  for any leaf $r$ of the tree the components of $\Phi_r({\bf S})$ are independent, then one of these components has a Gamma distribution and the remaining $p-1$ components have Kummer distributions. Our point of departure is a relatively simple independence property due to \cite{HV15}. It states that if $X$ and $Y$ are independent random variables having Kummer and Gamma distributions (with suitably related parameters) and $T:(0,\infty)^2\to(0,\infty)^2$ is the involution defined by $T(x,y) =(y/(1+x), x+xy/(1+x))$, then the random vector $T(X,Y)$ has also independent components with Kummer and gamma distributions.
By a method inspired by a proof of a similar result for the MY property, we show that this independence property characterizes the gamma and Kummer laws.
\end{abstract}

\maketitle

\section{Introduction}
\label{sec:1}

Let $X$ and $Y$ be independent random variables. There are several well-known cases where $U=\phi(X,Y)$ and $V=\psi(X,Y)$ are also independent. A number of distributions have actually been characterized this way. Classical results along these lines include Bernstein's characterization \cite{Be41} of the Gaussian distribution through independence of $U=X-Y$ and $V=X+Y$, and Lukacs' characterization \cite{Lu55} of the Gamma distribution through independence of $U=X/Y$ and $V=X+Y$.

At the end of the 1990s, a new result of this kind, called the Matsumoto--Yor (MY) property was discovered; see, e.g., \cite[p. 43]{St05}. It states that if $X$ has a generalized inverse Gaussian (GIG) distribution and $Y$ is Gamma, the random variables  $U=1/(X+Y)$ and $V=1/X-1/(X+Y)$  are independent. It arose in studies \cite{MY01, MY03} of the conditional structure of some functionals of the geometric Brownian motion. See \cite{LW00} for a related characterization of the GIG and Gamma distributions through the independence of $X$ and $Y$ and of $U$ and $V$.

The MY property is also strongly rooted in classical multivariate analysis. Its matrix-variate version appears naturally in the conditional structure of Wishart matrices; see, e.g., \cite{MW06} as well as \cite{Bu98} and \cite{GH02}. A higher-dimensional version of the MY property and related characterization was studied in \cite{MW04}, where a Gamma-type multivariate distribution was obtained by connecting its density shape to a tree. Through a suitable transformation related to directed trees, a random vector having the latter distribution was mapped to independent components with GIG and Gamma distributions. This approach also led to a characterization of the product of GIG and Gamma distributions. In the special case of a chain with two vertices these results are equivalent to the characterization through the original MY property.

The MY property attracted a lot of attention in the last 15 years. In particular, \cite{KV12} tried to identify all possible functions $f$ and distributions of independent $X$ and $Y$ such that $f(X+Y)$ and $f(X)-f(X+Y)$ are also independent. The MY property corresponds to the case $f(x) = 1/x$. Another important case identified in that paper occurs when $f(x)=\ln(1+ 1/x)$. Thus if $X$ and $Y$ are independent with Kummer and Gamma distributions, then
$$
U=X+Y \quad \mbox{and} \quad V = \frac{1+{1}/(X+Y)}{1+{1}/{X}}
$$
are independent Kummer and Beta distributed random variables, respectively.

Characterizations by independence of $X$ and $Y$ and of $U$ and $V$ were obtained in \cite{KV11, KV12}. To derive their results, however, the authors needed to impose technical conditions of differentiability or local integrability of logarithms of strictly positive densities. Recently a regression characterization under natural integrability condition was given in \cite{We15} without any assumptions on the densities.

In the present paper we are interested in an independence property discovered recently in \cite{HV15}. It states that if $X$ and $Y$ are independent random variables with Kummer and Gamma distributions, then
$$
U=Y/(1+X) \quad \mbox{and} \quad V=X\,\frac{1+X+Y}{1+X}
$$
are also independent and have Kummer and Gamma distributions, respectively. In studying this property, we will exploit several of the ideas described above. In particular, we will give a characterization of the Gamma and Kummer distributions through the independence of the components in the pairs $(X,Y)$ and $(U,V)$. In the proof, inspired from \cite{We02a}, we will use the method of functional equations for densities assuming local integrability of their logarithms. This is reported in Section~\ref{sec:2}.

Next, we will introduce and study multivariate versions of the property described above. Our approach parallels the one adopted in \cite{MW04} for a multivariate version of the MY property. We will first define a $p$-variate tree-Kummer distribution from an undirected tree $T$ of size $p$. For each vertex $r$ of $T$, we will define the directed tree by choosing $r$ as its root. To each such directed tree, we will associate a transformation $\Phi_r:(0,\infty)^p\to(0,\infty)^p$ and show that if a random vector ${\bf S}$ has a tree-Kummer distribution, then $\Phi_r(\bf S)$ has independent components with Gamma and Kummer distributions. This analogue of Theorem~3.1 in \cite{MW04} is given in Section~\ref{sec:3}.

In Section~\ref{sec:4} we will derive a characterization of products of Gamma and Kummer distributions (and thus of the tree-Kummer distribution) assuming that for any leaf $r$ of the tree $T$, the components of $\Phi_r(\bf S)$ are independent. This result parallels Theorem~4.1 in \cite{MW04}. Finally, Section~\ref{sec:5} contains some concluding remarks.

\section{Kummer and gamma characterization}
\label{sec:2}

The Kummer distribution $\mathcal{K}(\alpha,\beta,\gamma)$ with parameters $\alpha,\gamma>0$, $\beta\in\R$ has density
$$
f(x)\propto \frac{x^{\alpha-1}}{(1+x)^{\alpha+\beta}}\,e^{-\gamma x}\,\mathbbm{1}_{(0,\infty)}(x).
$$
If $\beta>0$ it is a natural exponential family generated by the second kind Beta distribution. More information on the Kummer distribution, its  properties and applications can be found in \cite{AB97, AB14, CDONP, Dy53, KN95, KV12, Le09} and in the monograph \cite{BJK95}.

By the Gamma distribution $\mathcal{G}(\alpha,\gamma)$ with parameters $\alpha,\gamma>0$, we mean the distribution whose density is given by
$$
g(x)\propto x^{\alpha-1}e^{-\gamma x}\mathbbm{1}_{(0,\infty)}(x).
$$

Consider two independent random variables $X$ and $Y$ with respective distributions $X\sim \mathcal{K}(a,b-a,c)$ and $Y\sim \mathcal{G}(b,c)$, where $a,b,c>0$. Define a bijection $T:\; (0,\infty)^2\to (0,\; \infty)^2$ by
$$
T(x,y) = \left( \frac{y}{1+x}, x\left(1 + \frac{y}{1+x}\right)\right).
$$

Let $(U,V)=T(X,Y)$. It has been observed in \cite{HV15} that $U$ and $V$ are independent and $U\sim \mathcal{K}(b, a-b, c)$, $V\sim\mathcal{G}(a,c)$. Our objective in this section is to give a converse of this result, that is a characterization of the Kummer and the Gamma distribution through the independence property mentioned above. Unfortunately, as in the case of the characterization of the Kummer and Gamma distributions obtained by \cite{KV11, KV12}, we also need to impose some regularity conditions on densities.

\begin{theorem}
Let $X$ and $Y$ be two independent positive random variables with positive and continuously differentiable densities on $(0,\infty)$. Suppose that
$$ U = \frac{Y}{1+X},\; \; V = X\left(1 + \frac{Y}{1+X}\right),$$
are independent. Then there exist constants $a,b,c>0$, such that $ X\sim \mathcal{K}(a,b-a,c)$, $ Y\sim \mathcal{G}(b, c)$ or, equivalently, $U\sim \mathcal{K}(b, a-b, c)$ and $V\sim\mathcal{G}(a,c)$.
\label{tw1}
\end{theorem}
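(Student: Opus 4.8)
The plan is to run the classical functional-equation method for densities. Since the map $T$ is an involution, the change-of-variables formula gives the joint-density identity
\[
f_U(u)\,f_V(v) = f_X\!\Big(\tfrac{v}{1+u}\Big)\,f_Y\!\Big(\tfrac{u(1+u+v)}{1+u}\Big)\,\frac{1+u+v}{(1+u)^2},
\]
valid for all $u,v>0$, where $f_U,f_V$ are the (necessarily positive) densities of $U,V$. Because $f_X,f_Y$ are positive and continuously differentiable, the right-hand side is $C^1$ in $(u,v)$, and dividing by one factor forces $f_U,f_V$ to be $C^1$ as well. Writing $\phi=\log f_X$, $\psi=\log f_Y$, $\alpha=\log f_U$, $\beta=\log f_V$ and taking logarithms yields
\[
\alpha(u)+\beta(v) = \phi\!\Big(\tfrac{v}{1+u}\Big) + \psi\!\Big(\tfrac{u(1+u+v)}{1+u}\Big) + \log(1+u+v) - 2\log(1+u).
\]
A preliminary bootstrapping step, differentiating this relation once and exploiting that its left side separates, upgrades $\phi,\psi$ to $C^2$ so that the manipulations below are licit.

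First I would apply $\partial^2/\partial u\,\partial v$, which annihilates $\alpha(u)+\beta(v)$ together with the term $-2\log(1+u)$. The decisive move is then the substitution $s=v/(1+u)$: a short computation shows that the two composed arguments and the coupling term collapse to $t := u(1+u+v)/(1+u) = u(1+s)$ and $1+u+v=(1+u)(1+s)$. After multiplying through by $(1+u)^2$, the mixed-derivative equation becomes the clean relation
\[
-s\,\phi''(s) - \phi'(s) + u(1+u+s)\,\psi''\big(u(1+s)\big) + \psi'\big(u(1+s)\big) = \frac{1}{(1+s)^2},
\]
in which $u$ and $s$ may be varied freely. This reduction is the technical heart of the argument.

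Differentiating the last identity in $u$ removes every $s$-only term, and after dividing by $1+u+s$ and using $t=u(1+s)$ the $\psi$-part reduces to the Euler-type ODE $t\,\psi'''(t)+2\,\psi''(t)=0$. Hence $\psi''(t)=C t^{-2}$, so $\psi(t)=-C\log t + Dt + E$; positivity and integrability of $f_Y$ force $C=1-b<1$ and $D=-c<0$, i.e. $Y\sim\mathcal{G}(b,c)$. Substituting $\psi',\psi''$ back into the displayed relation, the $u$-dependence cancels identically (a welcome consistency check) and leaves $-(s\,\phi'(s))' = b(1+s)^{-2}+c$. Integrating twice, with the integration constant absorbed into a parameter $a$, gives $\phi(s)=(a-1)\log s - b\log(1+s) - cs + \mathrm{const}$, that is $f_X(x)\propto x^{a-1}(1+x)^{-b}e^{-cx}$, which is exactly $\mathcal{K}(a,b-a,c)$; integrability pins down $a,c>0$. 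The equivalent statement for $(U,V)$ then follows from the direct Hassairi--Vento computation recalled before the theorem.

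The main obstacle is twofold: justifying the two differentiations under the weak $C^1$ hypothesis, handled by the bootstrapping step, and, above all, spotting the substitution $s=v/(1+u)$ that simultaneously linearizes both composed arguments. Without it the mixed-partial equation is an unwieldy combination of $\phi',\phi'',\psi',\psi''$ evaluated at two different nonlinear arguments; once the equation is in the form displayed, extracting the two ODEs and matching parameters is routine.
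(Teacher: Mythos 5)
Your computations are correct as far as they go: I checked the mixed-derivative identity, the reduction to $t\,\psi'''(t)+2\,\psi''(t)=0$, and the cancellation of the $u$-dependence after back-substitution, and all of them hold. Your route even has the pleasant feature that the equality of the two exponential rates comes out automatically from the relation $-(s\,\phi'(s))'=b(1+s)^{-2}+c$, whereas the paper needs a separate final step (plugging the derived density forms back into the original identity to force $d=c$). Nevertheless there is a genuine gap, located exactly at the point you dismiss in one sentence: regularity. The theorem assumes only $C^1$ densities. Your argument takes the mixed partial $\partial^2/\partial u\,\partial v$ (needing $\phi,\psi\in C^2$) and then differentiates once more in $u$ (producing $\psi'''$, hence needing $\psi\in C^3$). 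The ``preliminary bootstrapping step'' you invoke is not a proof: differentiating the functional equation once and using that the left side separates only expresses $\phi'(s)$ and $\psi'(t)$ as continuous combinations of the continuous functions $\alpha'$, $\beta'$, which is no gain in smoothness. A real bootstrap can be carried out (solve the two first-order equations for $\phi'(s)$ and $\psi'(t)$, integrate along level curves of $t=u(1+s)$, apply a change-of-variables and Leibniz argument, and iterate using the involution symmetry, in the style of J\'arai-type regularity theorems), but that is a substantive argument which you have not given. Worse, even granting your claim as stated, it is insufficient for your own proof: you claim an upgrade to $C^2$, while the derivation of the Euler ODE uses $\psi'''$, i.e.\ $C^3$.

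This gap is precisely what the paper's own proof is engineered to avoid, which is why the two arguments diverge. The paper differentiates the functional equation only once (in $u$; and, after the second substitution $u=y$, $v=x(1+y)$, once in $x$), then extracts the densities from limiting arguments as $x\to 0^+$, $y\to 0^+$ and $x\to\infty$; no derivative beyond the first is ever taken, so the $C^1$ hypothesis suffices, and it is subsequently weakened to local integrability of the log-densities in the Lemma that follows. To repair your proof you must either actually carry out the smoothing argument up to $C^3$ (in which case your ODE route becomes a valid, and in the parameter-matching step cleaner, alternative), or strengthen the hypothesis of the theorem to $C^3$, which would prove a strictly weaker statement than the one in question. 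A minor point: the independence property you cite at the end is due to Hamza and Vallois, not ``Hassairi--Vento''.
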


\begin{proof}
Note that $T$ is an involution. Given that $(U,V)=T(X,Y)$, we also have $(X,Y) = T(U,V)$. Given that the random vectors $(U, V)$ and $(X,Y)$ have independent components with continuous densities $p_U$, $p_V$, $p_X$ and $p_Y$ respectively, the independence property can be rewritten as
\bel{puu}
p_{U}(u) p_V(v) = |J(u,v)|p_X\left(\frac{v}{1+u}\right)p_Y\left\{u\left(1+\frac{v}{1+u}\right)\right\}
\ee
for all $x,y,u,v>0$, where $J$ is the Jacobian. Furthermore, given that
$$
J(u,v) = \frac{1}{1+u} \left(1 + \frac{v}{1+u}\right)
$$
and the densities are strictly positive it follows that Eq.~\eqref{puu} can alternatively be written as the functional equation
\bel{aubv}
A(u) + B(v)  =  C\left(\frac{v}{1+u}\right) + D\left\{u\left(1+\frac{v}{1+u}\right)\right\}
\ee
where
$$
A(x)  = \ln \,p_U(x) +\ln \,x,\qquad B(x)  = \ln \,p_V(x)+\ln \,x,
$$$$
C(x) = \ln \,p_X(x)+\ln \,x, \qquad D(x)  =  \ln \,p_Y(x)+\ln \,x.
$$
Differentiating both sides of \eqref{aubv} with respect to $u$ gives
$$
A'(u) =-\frac{v}{(1+u)^2} C'\left(\frac{v}{1+u}\right) + D'\left\{u\left(1+\frac{v}{1+u}\right)\right\} \left\{1+\frac{v}{(1+u)^2}\right\}.
$$
Now, we insert $(x,y) = T(u,v)$ in the above equation and get
\bel{eq_tu}
C'\left(x\right)x(1+x) = D'\left(y\right) \left\{1+x+y+x(1+x)\right\}-A'\left(\frac{y}{1+x}\right)(1+x+y).
\ee
Note that the right-hand side of Eq.~\eqref{eq_tu} converges to $\{D'(y)-A'(y)\}(1+y)$ as $x \to 0$. Hence, the left-hand side of Eq.~\eqref{eq_tu} also has a limit, say $-C_0$, when $x\to 0^+$ and $C_0$ does not depend on $y$ since there is no $y$ on the left-hand side of Eq.~\eqref{eq_tu}. Therefore,
$$
A'(y) =\frac{C_0}{1+y}+D'(y).
$$
We insert it back into Eq.~\eqref{eq_tu} to arrive at
\bel{eq_do_zbieg}
\frac{C_0}{x}+D'\left(\frac{y}{1+x}\right)\frac{1+x+y}{xy}\frac{y}{1+x} =-\frac{}{} C'(x)+ D'(y)\left\{ \frac{1+x+y}{x(1+x)}+1\right\}.
\ee

On the other hand with $u=y$ and $v=x(1+y)$, Eq.~\eqref{aubv} reads
\bel{ay}A(y)+B\{x(1+y)\}=C(x) +D\{y(1+x)\}.\ee
Differentiation with respect to $x$ yields
\bel{b'}B'\{x(1+y)\} (1+y)(1+x) -C'(x)(1+ x) = D'\{y(1+x)\}y(1+x) .\ee
Note that the left-hand side above has a finite limit when $y\to 0^+$.  Consequently, denoting $z=y(1+x)$ we conclude that $\lim_{z\to 0^+} D'(z) z=: b$ exists and is finite.
Therefore the left-hand side of Eq.~\eqref{eq_do_zbieg} is finite when $x\to\infty$. By comparing it with the right-hand side of \eqref{eq_do_zbieg} we conclude that $-\lim_{x\to\infty}C'(x)=:c$ is finite. Therefore, letting $x\to\infty$ in Eq.~\eqref{eq_do_zbieg}, we get
$$\frac{b}{y} = c + D'(y).$$
In view of the definition of $D$, we conclude that  $p_Y(y)\propto y^{b-1} e^{-cy}$ and $b,c>0$, i.e., $Y\sim\mathcal{G}(b,c)$.

Due to the fact that $T$ is an involution the functional equation \eqref{aubv} is symmetric in $(A,B)$ and $(C,D)$. Consequently, there exists a constant $a$ such that $B$ is of the form
$$
B'(x)  =\frac{a}{x} -d .
$$

In order to find $C$ let us note that Eq.~\eqref{b'} yields
$$
B'(x)(1+x) -C'(x)(1+x)=\lim_{y\to 0} D'(y(1+x))y(1+x)=b
$$
and thus
$$
C'(x) = \frac{a}{x}-d-\frac{b}{1+x}.
$$
Finally, we get
$$
C(x) = a\ln  x - d x -b\ln  (1+x) + c_0.
$$
Thus
$$
p_X(x)\propto \frac{x^{a-1}}{(1+x)^{b}}\,e^{-dx}\mathbbm{1}_{(0,\infty)}(x)
$$
and necessarily, $a>0$.

What is left to do is to determine the relationship between the parameters. Referring again to the definitions of $A$, $B$, $C$, $D$ and to the original Eq.~\eqref{puu}, we have
\begin{multline*}
p_U(u)\,  p_V(v) =\frac{1 + \frac{v}{1+u}}{1+u} p_X\left(\frac{v}{1+u}\right)p_Y\left\{u\left(1+\frac{v}{1+u}\right)\right\}
\\
\propto\frac{1+u+v}{(1+u)^2}\left(\frac{v}{1+u}\right)^{a-1}\left(\frac{1+u+v}{1+u}\right)^{-b} e^{-d\frac{v}{1+u}}\left(u\,\frac{1+u+v}{1+u}\right)^{b-1}e^{-cu\,\frac{1+u+v}{1+u}}$$
\\
=(1+u)^{-a}u^{b-1}e^{-cu}v^{a-1}e^{-cv}e^{-\frac{v}{1+u}(d-c)}.
\end{multline*}

Given that the function on the right-hand side has to be a product of a function of $u$ and a function of $v$, it follows that $d=c$. Thus, the result follows.
\end{proof}

To weaken the smoothness assumptions imposed on densities in Theorem \ref{tw1}, we will use local integrability instead of continuous differentiability, as proposed in \cite{We02a} for the MY-type functional equation, and then applied in \cite{KV11} in the characterization of the Kummer and Gamma distributions.

\begin{lemma}
It is sufficient to assume that logarithms of all densities are locally integrable in Theorem \ref{tw1}.
\end{lemma}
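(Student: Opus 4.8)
The plan is to upgrade the regularity of the four log-density functions from mere local integrability to continuous differentiability, after which the computations in the proof of Theorem~\ref{tw1} apply verbatim. Under the weaker hypothesis, the functions $A,B,C,D$ defined in that proof are locally integrable on $(0,\infty)$, and, since the change of variables \eqref{puu} is an identity between two densities of the same law, the functional equation \eqref{aubv} continues to hold for Lebesgue-almost all $(u,v)\in(0,\infty)^2$. It therefore suffices to produce continuous, and then $C^1$, versions of $A,B,C,D$ satisfying \eqref{aubv} everywhere.

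First I would smooth the equation by integration, following the device introduced in \cite{We02a}. Fix $0<v_1<v_2$ and integrate \eqref{aubv} in $v$ over $[v_1,v_2]$ for a generic $u$. The term $B$ contributes only an additive constant, while the term in $A$ contributes $(v_2-v_1)A(u)$. In the two right-hand terms I would apply, for fixed $u$, the affine substitutions $s=v/(1+u)$ and $t=u(1+u+v)/(1+u)=u+\tfrac{u}{1+u}v$, which are diffeomorphisms of $[v_1,v_2]$ onto $u$-dependent intervals; this rewrites those terms as $(1+u)\int C(s)\,ds$ and $\tfrac{1+u}{u}\int D(t)\,dt$ over the corresponding ranges. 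Because the integral of a locally integrable function is absolutely continuous in its endpoints, and the endpoints and prefactors depend continuously on $u$, the whole right-hand side is a continuous function of $u$; hence $A$ has a continuous version. Since, by the involutive symmetry of $T$, equation \eqref{aubv} is symmetric in the pairs $(A,B)$ and $(C,D)$, integrating instead in $u$ and exchanging the roles of the variables yields continuous versions of $B$, $C$ and $D$ as well.

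With continuity in hand, the same integral representation upgrades the regularity one step further: the right-hand side is now an integral of a continuous integrand with continuously varying endpoints, hence continuously differentiable in $u$ by the fundamental theorem of calculus, so $A\in C^1(0,\infty)$; the analogous argument gives $B,C,D\in C^1(0,\infty)$. Both sides of \eqref{aubv} are then continuous, so the equation holds for every $(u,v)\in(0,\infty)^2$, and every differentiation and limit step in the proof of Theorem~\ref{tw1}---the passage to \eqref{eq_tu}, the limits as $x\to0^+$ and $x\to\infty$ applied to \eqref{eq_do_zbieg}, and the manipulations of \eqref{ay} and \eqref{b'}---is justified for these versions. The conclusion of Theorem~\ref{tw1} then follows with the weakened hypothesis.

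The main obstacle I anticipate is the measure-theoretic bookkeeping in the smoothing step rather than any new idea. One must select a full-measure set of admissible $u$ for which the integrated identity holds, verify that the substitutions $s=v/(1+u)$ and $t=u(1+u+v)/(1+u)$ are genuine diffeomorphisms for each such $u$ with Jacobians bounded away from $0$ and $\infty$ on compacta, and apply Fubini carefully so that the almost-everywhere validity of \eqref{aubv} transfers to the continuous versions on all of $(0,\infty)^2$. A secondary point is to confirm that the additive constant absorbed from integrating $B$ (and its counterpart when integrating in $u$) does not obstruct the subsequent differentiation, which it does not, since it is annihilated by $\partial_u$.
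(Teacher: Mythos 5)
Your proposal is correct and follows essentially the same route as the paper: the iterated integration device of \cite{We02a}, applied to the functional equation to produce first continuous and then continuously differentiable versions of $A$, $B$, $C$, $D$, after which the proof of Theorem~\ref{tw1} goes through verbatim. The one place where the paper's execution is cleaner is the recovery of $B$ and $D$: you integrate \eqref{aubv} in $u$ (and its involution-swapped form in $x$), where the substitution $t=u\bigl(1+\tfrac{v}{1+u}\bigr)$ is not affine and leaves a nonconstant Jacobian weight inside the integral (your displayed prefactor $\tfrac{1+u}{u}$ is only valid for the $v$-integration), whereas the paper passes to the form \eqref{ay}, whose inner maps are affine in each variable separately, and then obtains $\tilde B$, $\tilde D$ from the explicit inverse \eqref{dziala} --- both routes work, since a locally integrable function integrated against a continuous weight over continuously varying endpoints is still continuous.
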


\begin{proof}
\setlength\arraycolsep{2pt}
Given that the other assumptions of Theorem \ref{tw1} are satisfied, we conclude that Eq.~\eqref{ay} holds for almost all $(x,y)\in (0,\infty)^2$. Given also that the functions $A$, $B$, $C$, $D$ are locally integrable we can take any  $0<x_0< x_1<\infty$ and integrate both sides of \eqref{ay} with respect to $x$ from $x_0$ to $x_1$. Then
\begin{eqnarray}
\int_{x_0}^{x_1}B\{x(1+y)\} dx -\int_{x_0}^{x_1}D\{y(1+x)\} \,dx & = & \int_{x_0}^{x_1} C(x) \,dx - A(y)(x_0-x_1).
\nonumber
\end{eqnarray}
We substitute $s = x(1+y)>0$ in the first integral  and  $t=y(1+x)>0$ in the second integral on the left-hand side above. Consequently, for almost all $y\in(0,\infty)$
\begin{eqnarray}
\int_{x_0(1+y)}^{x_1(1+y)}\frac{B(s)}{1+y} \, ds -\int_{(1+x_0)y}^{(1+x_1)y}\frac{D(t)}{y} \, dt &=& \int_{x_0}^{x_1} C(x) dx - A(y)(x_0-x_1).\label{cg_y}
\end{eqnarray}
The left-hand side of equation \eqref{cg_y} is continuous in $y\in (0,\infty)$. Hence, function $A$ can be extended to a continuous function $\tilde{A}$ on $(0,\infty)$.\\
Similarly, integrating \eqref{ay} with respect to $y$ from $y_0$ to $y_1$, $0<y_0<y_1<\infty $, we get
\begin{eqnarray}
\int_{(1+y_0)x}^{(1+y_1)x}\frac{B(s)}{x}\,ds -\int_{y_0(1+x)}^{y_1(1+x)}\frac{D(t)}{1+x}\,dt & = & (y_1-y_0) C(x) - \int_{y_0}^{y_1}A(y)dy\label{cg_x}
\end{eqnarray}
for almost all $x>0$. Therefore,  $C$ has also a continuous extension $\tilde{C}$ which is a function on $(0,\infty)$.

For any $x,y>0$ let $u= x(1+y)$ and $v = y(1+ x)$. Then
\begin{equation}
\label{dziala}
\left\{ \begin{array}{lll}
x &=& \frac{1}{2}\left(\sqrt{(1+v-u)^2 + 4u}-(1+v-u)\right)>0,\\
y & =& \frac{1}{2}\left(\sqrt{(1+u-v)^2 + 4v}-(1+u-v)\right)>0.
\end{array} \right.
\end{equation}
Plugging these values of $x$ and $y$ into Eq.~\eqref{ay} with $\tilde{A}=A$ and $\tilde{C}=C$, we get
\begin{multline}
B(u) - D(v) = \tilde{C}\left\{\frac{1}{2}\left(\sqrt{(1+v-u)^2 + 4u}-(1+v-u)\right)\right\} +
\\-\tilde{A}\left\{\frac{1}{2}\left(\sqrt{(1+u-v)^2 + 4v}-(1+u-v)\right)\right\}.
\label{cg_bd}
\end{multline}
The right-hand side of Eq.~\eqref{cg_bd} can be extended continuously for $u\in(0,\infty)$, which gives a continuous extension $\tilde{B}$ of $B$ on the left-hand side. Similarly, we can extend $D$ to $\tilde{D}$, which is continuous on $(0,\infty)$.

Consequently, we can assume that Eq.~\eqref{ay} is satisfied for all ${(x,y)\in(0,\infty)^2}$ and that the functions $A$, $B$, $C$ and $D$ appearing in \eqref{ay} are continuous. Now, repeating step by step the above reasoning, we see that $A$, $B$, $C$ and $D$ may be assumed continuously differentiable, i.e., all the assumptions of Theorem \ref{tw1} are satisfied.
\end{proof}

\section{Independence properties of tree-Kummer distribution}
\label{sec:3}

\subsection{A symmetrization of the independence property}
\label{sec:3.1}

The first step in approaching the tree-version of the MY property was to consider a symmetrized version of the classical MY property; see \cite{MW04}, pp. 686--687. Similarly here we start with a derivation of a symmetric version of the property of independence of Kummer and Gamma distributions from \cite{HV15}.

Consider independent random variables $X\sim \mathcal{K}(a,b-a,c)$ and $Y\sim \mathcal{G}(b,c)$ with parameters $a,b,c>0$, and $(U,V)=T(X,Y)$. Define a random vector $$(S_1,S_2)=\left(X,\frac{Y}{1+X}\right)=\left(\frac{V}{1+U},U\right).$$ Then $(X,Y)=(S_1,S_2(1+S_1))$ and $(V,U)=(S_1(1+S_2),S_2)$. For the density $p$ of $(S_1,S_2)$ we get
$$
p(s_1,s_2)=|J_{\psi^{-1}}(s_1,s_2)|p_X(x(s_1,s_2))\,p_Y(y(s_1,s_2))
$$
with $\psi(x,y)=\left(x, y/(1+x)\right)$ being a bijection on $(0,\infty)^2$. Therefore, $\psi^{-1}(s_1,s_2)=(x(s_1,s_2),\,y(s_1,s_2))=(s_1,\,(1+s_1)s_2)$ and thus
$$
p(s_1,s_2)\propto s_1^{a-1}s_2^{b-1}\,e^{-c(s_1+s_2+s_1s_2)}\mathbbm{1}_{(0,\infty)^2}(s_1,s_2).
$$

We will consider a more general distribution of this type by introducing three positive parameters $c_1$, $c_2$ and $c_{1,2}$  and writing
\bel{kt2}
p(s_1,s_2)\propto s_1^{a_1-1}s_2^{a_2-1}\,e^{-(c_1s_1+c_2s_2+c_{1,2}s_1s_2)}\mathbbm{1}_{(0,\infty)^2}(s_1,s_2).
\ee
This is the analogue of the density $f$ in \cite{MW04}, p. 687.

Let us also define two bijective mappings: $\Phi_r: (0,\infty)^2\rightarrow (0,\infty)^2$, $r\in\{1,2\}$, by
\begin{eqnarray*}
\Phi_1(s_1,s_2) &=& (s_{1,(1)},\,s_{2,(1)}) = \left(s_1\left(1 + \frac{c_{1,2}}{c_1}s_2 \right),\, s_2\right)=\left(s_1\left(1 + \frac{c_{1,2}}{c_1} s_{2,(1)}\right) ,\,s_2\right).
\\
\Phi_2(s_1,s_2) &=&(s_{1,(2)},s_{2,(2)}) = \left(s_1,\, s_2\left(1+\frac{c_{1,2}}{c_2}s_1\right)\right)= \left(s_1,\, s_2\left(1+\frac{c_{1,2}}{c_2}s_{1,(2)}\right)\right).
\end{eqnarray*}
They are analogs of the mappings $\psi_1$ and $\psi_2$ in \cite{MW04}, p. 686.

Assume that a random vector $(S_1,S_2)$ has the density $p$ as in \CG{Eq.}~\eqref{kt2}. Define $(X_{1,(1)}, X_{2,(1)})= \Phi_1(S_1,S_2)$ and $(X_{1,(2)}, X_{2,(2)})= \Phi_2(S_1,S_2)$. Then standard computations involving density transformation yield
\bel{x1}\left(\frac{c_{1,2}}{c_1}X_{1,(1)},\,\frac{c_{1,2}}{c_2}\,X_{2,(1)}\right)\sim \mathcal{G}(a_1,c')\otimes \mathcal{K}(a_2,a_1-a_2,c')\ee and
\bel{x2}\left(\frac{c_{1,2}}{c_2}X_{1,(2)},\,\frac{c_{1,2}}{c_2} X_{2,(2)}\right)\sim \mathcal{K}(a_1,a_2-a_1,c')\otimes\mathcal{G}(a_2,c'),\ee
where $c'= c_1c_2/{c_{1,2}}$ and $\mu\otimes\nu$ denotes a distribution which is a product of distributions $\mu$ and $\nu$.

Our aim in this section is to extend the above construction to any dimension. As in \cite{MW04} the language of undirected and directed trees will be very helpful in this context. Similarly to \cite{MW04}, Section 2, we need to provide background information regarding the tree language and certain facts about transformations.

\subsection{Trees and transformations}
\label{sec:3.2}

Let us recall that a graph $G=(V,E)$, where $V$ is the set of nodes and $E\subseteq \{\{u,v\}: u,v\in V, u\neq v\}$ is the set of edges, is called a \textit{tree}, if it is connected and acyclic.  The symbol $\deg_G(i)$ stands for the degree of vertex $i\in V$, i.e., $\deg _G(i) = |\{\ell:\; \{\ell,i\}\in E\}|$, where $|B|$ denotes a number of elements in a finite set $B$. A vertex of degree 1 is a \emph{leaf}.
We define a \emph{subtree} $S$ of a tree $T=(V, E)$ as the graph $S=(V(S), E(S))$, where $V(S)\subseteq V$ and $E(S) = \left\{ \{i,j\} \in E: i, j\in V(S)\right\}$. If $S$ is a subtree of $T$ we write $S\subseteq T$.

Let $T=(V,E)$ be a tree of size $p\geq 2$. For a fixed root $r\in V$, we direct $T$ from the root towards leaves and denote such a directed tree by $T_r$. Having the tree directed, we can say that node $i$ is a \textit{child} of vertex $j$ (or $j$ is a \textit{parent} of $i$) if and only if $\{i,j\}\in E$ and the tree is directed from $j$ to $i$ (note that every node, unless it is a leaf, has at least one child and every node but the root has exactly one parent). The set of all children of $i$ in $T_r$ will be denoted by $\mathfrak{C}_r(i)$  and the parent of vertex $i$ by  $\mathfrak{p}_r(i)$. We say that the undirected tree $T$ is the \emph{skeleton} of $T_r$.

\begin{remark}
Note that when drawing parallels between the result presented here and those in \cite{MW04}, one has to keep in mind that there, contrary to the tradition in the graph theory, the tree is directed in the opposite way: from leaves toward a chosen root. Therefore, the parents and children notions here and in \cite{MW04} have to be swapped.
\end{remark}

For any $r\in V$ (and hence $T_r$) and fixed set of parameters $c_{i,j}>0$, $i,j\in V$ ($c_{i,i}=:c_i$) we define \CG{a} transformation $\Phi^T_r: \mathbb{R}_+^p \rightarrow \mathbb{R}_+^p $ by
\bel{frt}\Phi_r^T (s_i, i\in V) = ( s_{i,(r)}, i\in V ),\ee where
\bel{gw}s_{i,(r)}=s_i\,\displaystyle{\prod_{j\in \mathfrak{C}_r(i)}\,\left(1+\frac{c_{i,j}}{c_i} s_{j,(r)}\right)},
\ee
where by convention an empty product is equal to $1$, i.e., if $V\ni i\ne r$ is a leaf then $s_{i,(r)}=s_i$. The definition  \eqref{gw} is inverse recursive with a starting point being any vertex with maximal distance from the root $r$. Usually we will write $\Phi_r$ instead of $\Phi_r^T$. Compare these mappings with the mappings $\psi_r$, $r\in V$, defined in (2.5) and (2.6) of \cite{MW04}.

It is easy to see that $\Phi_r$ is a bijection for any $r\in V$. The transformation $\Phi_r^{-1}: \; \mathbb{R}_+^p \rightarrow \mathbb{R}_+^p$ is given by ${\Phi_r^{-1}(y_j, j\in V)=(s_i, i\in V)}$, where
$$s_i = \frac{y_i}{\prod_{j\in \mathfrak{C}_r(i)}(1+\frac{c_{i,j}}{c_i}y_j)},\quad i\in V,$$
and again the product over the empty set is considered to be equal to $1$.

First we compute the Jacobian $J_r$ of  $\Phi_r^{-1}$. Note, that if we enumerate the nodes in such a way that each child has a number  greater than its parents, the Jacobi matrix is upper triangular. Thus, we only need to find the elements on its diagonal which are partial derivatives
$$
\frac{\partial s_i}{\partial y_i}=\left\{\prod_{j\in \mathfrak{C}_r(i)}\left(1+\frac{c_{i,j}}{c_i}y_j\right)\right\}^{-1}, \quad  i\in V.
$$
Considering that every node but the root has exactly one parent, it appears in exactly one partial derivative. Hence, we have
\bel{jakob}
J_r(y_i, i\in V)=\left\{\prod_{i\in V\setminus\{r\}}\,\left(1+\frac{c_{\mathfrak{p}_r(i),i}}{c_{\mathfrak{p}_r(i)}}y_i\right)\right\}^{-1}.
\ee

We will also need the identity given in the following proposition.
\begin{proposition}\label{tw_graf_og}
For any $r\in V$ if $(s_{i,(r)})_{i\in V}$ and $(s_i)_{i\in V}$ are related by \eqref{gw} then
\bel{summ}\sum_{m\in V}\, c_m s_{m,(r)} =\sum_{S\subseteq T}\prod_{i\in V(S)} \frac{s_i}{c_i^{\deg _S (i)-1}}\prod_{\{j,k\}\in E(S)} c_{j,k} .\ee
\end{proposition}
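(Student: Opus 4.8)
The plan is to prove a sharper, \emph{local} version of the identity at each vertex and then sum it over all vertices. For $i\in V$ let $T_i^{\downarrow}$ denote the subtree of $T_r$ induced by $i$ together with all of its descendants, and abbreviate the summand on the right-hand side of \eqref{summ} by
$$
W(S)=\prod_{k\in V(S)}\frac{s_k}{c_k^{\deg_S(k)-1}}\prod_{\{j,k\}\in E(S)}c_{j,k}.
$$
I would first establish the claim that, for every $i\in V$,
$$
c_i\,s_{i,(r)}=\sum_{\substack{S\subseteq T_i^{\downarrow}\\ i\in V(S)}}W(S),
$$
i.e.\ $c_i s_{i,(r)}$ is the total weight of all subtrees of the descendant-subtree $T_i^{\downarrow}$ that contain $i$. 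The global identity \eqref{summ} will then follow by summing over $i$.

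To prove the claim I would induct on the size of $T_i^{\downarrow}$ (equivalently, process the tree from the leaves upward), using the recursion \eqref{gw}. Writing $\sigma_j:=c_j s_{j,(r)}$ and substituting the inductive hypothesis $\sigma_j=\sum_{S_j}W(S_j)$ (sum over subtrees $S_j\subseteq T_j^{\downarrow}$ with $j\in V(S_j)$) into
$$
c_i s_{i,(r)}=c_i s_i\prod_{j\in\mathfrak{C}_r(i)}\Bigl(1+\tfrac{c_{i,j}}{c_i c_j}\,\sigma_j\Bigr),
$$
then expanding the product over children as a sum over subsets $A\subseteq\mathfrak{C}_r(i)$, I obtain
$$
c_i s_{i,(r)}=\sum_{A\subseteq\mathfrak{C}_r(i)}\frac{s_i}{c_i^{|A|-1}}\prod_{j\in A}\frac{c_{i,j}}{c_j}\Bigl(\sum_{S_j}W(S_j)\Bigr).
$$
The combinatorial core is the bijection between subtrees $S\subseteq T_i^{\downarrow}$ containing $i$ and the data of a subset $A=V(S)\cap\mathfrak{C}_r(i)$ together with, for each $j\in A$, a subtree $S_j\subseteq T_j^{\downarrow}$ containing $j$; here $S$ is reassembled as $\{i\}\cup\bigcup_{j\in A}S_j$ with the extra edges $\{i,j\}$, $j\in A$.

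The step I expect to be the main obstacle is matching the weight bookkeeping under this gluing, since the degree factors $c_k^{\deg_S(k)-1}$ are sensitive to how the branches are attached. Adjoining the edge $\{i,j\}$ raises $\deg_S(j)$ by one relative to $\deg_{S_j}(j)$, whereas the degrees of all other vertices of $S_j$ are unchanged (their neighbours all lie inside $T_j^{\downarrow}$); consequently branch $S_j$ contributes exactly $c_j^{-1}W(S_j)$ to $W(S)$, while $\deg_S(i)=|A|$ contributes the factor $s_i\,c_i^{1-|A|}$ and the new edges contribute $\prod_{j\in A}c_{i,j}$. This yields
$$
W(S)=\frac{s_i}{c_i^{|A|-1}}\prod_{j\in A}\frac{c_{i,j}}{c_j}\,W(S_j),
$$
and verifying that these degree shifts reproduce precisely the denominators $c_j$ and the power $c_i^{|A|-1}$ appearing in the expansion above is the crux; summing over $A$ and over the $S_j$ then closes the induction.

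Finally I would sum the local identity over all $m\in V$. Each subtree $S\subseteq T$ has a unique vertex $m=\mathrm{top}_r(S)$ nearest the root (the weak common ancestor of $V(S)$ lying in $S$), and this $m$ is characterized exactly by the conditions $S\subseteq T_m^{\downarrow}$ and $m\in V(S)$. Hence $S\mapsto(\mathrm{top}_r(S),S)$ is a bijection between subtrees of $T$ and the index pairs occurring across the local identities, so that
$$
\sum_{m\in V}c_m s_{m,(r)}=\sum_{m\in V}\sum_{\substack{S\subseteq T_m^{\downarrow}\\ m\in V(S)}}W(S)=\sum_{S\subseteq T}W(S),
$$
which is \eqref{summ}. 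As a byproduct, this re-derivation makes the root-independence of the left-hand side transparent, because the right-hand side does not involve $r$ at all.
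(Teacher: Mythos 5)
Your proof is correct and follows essentially the same route as the paper: the local identity $c_i s_{i,(r)}=\sum_{S\subseteq T_i^{\downarrow},\, i\in V(S)}W(S)$ is exactly the paper's equation \eqref{row_og} (with your family of subtrees of $T_i^{\downarrow}$ containing $i$ coinciding with the paper's $\mathcal{S}(r,i)$), it is proved by the same leaves-to-root induction with the same gluing over subsets of children and the same degree bookkeeping, and the final summation over the root-nearest vertex of each subtree is the paper's decomposition $\sum_{S\subseteq T}=\sum_{m}\sum_{S\in\mathcal{S}(r,m)}$. No gaps.
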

\begin{proof}
Fix a root $r\in V$ in the tree $T$. Let $\mathcal{S}^m_r$ denote the family of directed subtrees of $T_r$  with the root $m\in V$. By $\mathcal{S}(r,m)$ we denote the family of undirected trees which are skeletons (undirected trees) of (directed) trees from $\mathcal{S}^m_r$.

We will show, using mathematical induction, that for every $m\in V$
\bel{row_og}
\sum_{S\in \mathcal{S}(r,m)}\prod_{\{j,k\}\in E(S)} c_{j,k}\prod_{i \in V(S)} \frac{s_i}{c_i^{\deg _{S}(i)-1}} =c_m s_{m,(r)}.
\ee
Given that
$$
\sum_{S\subseteq T}\prod_{i\in V(S)} \frac{s_i}{c_i^{\deg _S (i)-1}}\prod_{\{j,k\}\in E(S)} c_{j,k} =\sum_{m\in V}\sum_{S\in\mathcal{S}(r,m)}\prod_{\{j,k\}\in E(S)} c_{j,k}\prod_{i \in V(S)} \frac{s_i}{c_i^{\deg _{S} (i)-1}},
$$
the identity \eqref{summ} follows from Eq.~\eqref{row_og}.

In order to prove \eqref{row_og} we will rely on the definition \eqref{gw}. The proof will be by induction with respect to the distance of the vertex $m$ from its farthest descendant.

We first consider the case $\mathfrak{C}_r(m)=\emptyset$. Then the left-hand side of \eqref{row_og} equals
$s_mc_m$ as the only element of $\mathcal{S}(r,m)$ is a trivial tree  $(\{m\},\emptyset)$. In this case  $s_{m,(r)}=s_m$ due to \eqref{gw}, and thus the identity \eqref{row_og} follows.

Now we need to consider the case $\mathfrak{C}_r(m)\ne \emptyset$ assuming that for all $n\in \mathfrak{C}_r(m)$ the identity \eqref{row_og} holds, i.e.,
\bel{indu}\sum_{S\in\mathcal{S}(r,n)}\prod_{\{j,k\}\in E(S)} c_{j,k}\prod_{i \in V(S)} \frac{s_i}{c_i^{\deg _{S} (i)-1}} =c_n s_{n,(r)}.\ee

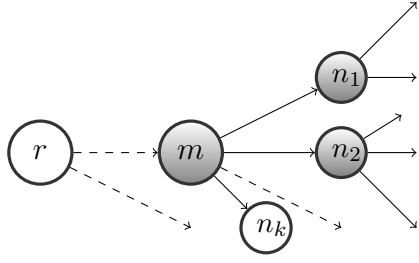
\begin{figure}[h]\begin{tikzpicture}
\tikzstyle{male} = [ font={\large}, shape=circle, minimum size=0.1cm,text=black,
very thick, draw=black!80,bottom color=black!50, top color=white, text width=0.25cm, align=left]
\tikzstyle{male1} = [ font={\large}, shape=circle, minimum size=0.1cm,text=black,
very thick, draw=black!80, top color=white, text width=0.25cm, align=left]
\tikzstyle{mynodestyle} = [ font={\Large\bfseries}, shape=circle, minimum size=0.2cm,text=black,
very thick, draw=black!80, top color=white, text width=0.5cm, align=center]
\tikzstyle{duzy} = [ font={\Large\bfseries}, shape=circle, minimum size=0.2cm,text=black,
very thick, draw=black!80, top color=white,bottom color=black!50, text width=0.5cm, align=center]
\node[ mynodestyle] (v1) at (-4,3) {$r$};
\node[ duzy] (v3) at (-2,3) {$m$};
\node[male] (v2) at (0,4)  {$n_1$};
\draw[dashed, ->] (v1) -- (v3);
\node [male] (v4) at (0,3) {$n_2$};
\draw[dashed, ->] (v1) -- (-2,2);
\draw[dashed, ->] (v3) -- (0,2);
\draw[->] (v2) -- (1,5);
\draw[ ->] (v2) -- (1,4);
\draw[->] (v4) -- (1,3);
\draw[->] (v4) -> (1,2);
\draw[->] (v4)-> (0.8,3.5);
\node[male1] (vk) at (-1,2) {$n_k$};
\draw[->] (v3) -- (v4);
\draw[->] (v3) -- (v2);
\draw[->] (v3) -- (vk);

\end{tikzpicture}\caption{Tree $T_r$ with one of possible $S\in \mathcal{S}(r,m)$ colored grey.}
\label{fig1}\end{figure}

Note that any element of $\mathcal{S}(r,m)$ is a union of subtrees from $\mathcal{S}(r,n)$, $n\in \mathfrak{C}_r(m)$, extended by attaching to such a union the vertex $m$  with suitably chosen edges (see Figure \ref{fig1} for illustration). For any $B\subset\mathfrak{C}_r(m)$ (ordered by numbers of vertices) we denote $\mathcal{S}(r,B)=\times_{w\in B}\,\mathcal{S}(r,w)$, a Cartesian product of families of subtrees. Denote also $S_B=(S_w)_{w\in B}\in\mathcal{S}(r,B)$ and note that it is a vector of subtrees. Using this notation we can write the family $\mathcal{S}(r,m)$ as follows
$$
\mathcal{S}(r,m) =
\left\{\left( V(S_B)\cup\{m\},\; E(S_B)\cup\bigcup_{w\in B}\,\{\{m,w\}\}\right),\; S_B\in\mathcal{S}(r,B),\;B\subset\mathfrak{C}_r(m) \right\},
$$
where
$$
V(S_B)=\bigcup_{w\in B}\, V(S_w) \quad \mbox{and} \quad E(S_B)=\bigcup_{w\in B}\,E(S_w).
$$
In particular, the element in the above family associated to $B=\emptyset\subset\mathfrak{C}_r(m)$ is the trivial tree $(\{m\},\emptyset)$. Then
\begin{multline}\label{aaa}
\sum_{S\in\mathcal{S}(r,m)}\prod_{\{j,k\}\in E(S)} c_{j,k}\prod_{i \in V(S)} \frac{s_i}{c_i^{\deg_S(i)-1}} \\
=c_ms_m +
\sum_{\emptyset\ne B\subset\mathfrak{C}_r(m)}\,\sum_{S_B\in\mathcal{S}(r,B)}\,\prod_{\{j,k\}\in\,E(S_B(m))}\,c_{j,k}\,\prod_{i\in V(S_B(m))}\, \frac{s_i}{c_i^{\deg_{S_B(m)}(i)-1}},
\end{multline}
where
$$
S_B(m)=\left(V(S_B(m)),\,E(S_B(m))\right)=\left(V(S_B)\cup\{m\},\; E(S_B)\cup\bigcup_{w\in B}\,\{\{m,w\}\}\right).
$$
Therefore,  $$\deg_{S_B(m)}(i)=\left\{\begin{array}{lll}|B| & \mbox{for} & i=m,\\ \deg_{S_w}(i) & \mbox{for} & i\in V(S_w)\setminus\{w\},\;w\in B,\\ \deg_{S_w}(w)+1 & \mbox{for} & i=w,\;w\in B.\end{array}\right.$$ Consequently, the double sum in \eqref{aaa} can be written as
$$
c_ms_m\sum_{\emptyset\ne B\subset\mathfrak{C}_r(m)}\,\frac{1}{c_m^{|B|}}\,\left(\prod_{w\in B}\,\frac{c_{m,w}}{c_w}\right)\,\sum_{S_B\in\mathcal{S}(r,B)}\,\prod_{w\in B}\,\left(\prod_{\{j,k\}\in E(S_w)}\,c_{j,k}\,\prod_{i\in V(S_w)}\, \frac{s_i}{c_i^{\deg_{S_w}(i)-1}}\right)
$$$$
=c_ms_m\sum_{\emptyset\ne B\subset\mathfrak{C}_r(m)}\,\frac{1}{c_m^{|B|}}\,\left(\prod_{w\in B}\,\frac{c_{m,w}}{c_w}\right)\,\prod_{w\in B}\,\left(\sum_{S_w\in\mathcal{S}(r,w)}\,\prod_{\{j,k\}\in\,E(S_w)}\,c_{j,k}\,\prod_{i\in V(S_w)}\,\frac{s_i}{c_i^{\deg_{S_w}(i)-1}}\right)
$$$$=c_ms_m\sum_{\emptyset\ne B\subset\mathfrak{C}_r(m)}\,\frac{1}{c_m^{|B|}}\,\prod_{w\in B}\,c_{m,w}\,s_{w,(r)},$$
where the last equality follows form the induction assumption \eqref{indu} applied to every sum under the product $\prod_{w\in B}$ in the previous expression. Summing up we have
$$
\sum_{S\in\mathcal{S}(r,m)}\prod_{\{j,k\}\in E(S)} c_{j,k}\prod_{i \in V(S)} \frac{s_i}{c_i^{d_{(r,m)}(i)}} =c_ms_m\left[1+\sum_{\emptyset\ne B\subset\mathfrak{C}_r(m)}\,\frac{1}{c_m^{|B|}}\,\prod_{w\in B}\,c_{m,w}\,s_{w,(r)}\right]
$$$$=c_ms_m\prod_{w\in\,\mathfrak{C}_r(m)}\left(1+\frac{c_{m,w}\,s_{w,(r)}}{c_m}\right)=c_ms_{m,(r)},
$$
where the last equation is a consequence of \eqref{gw}.
\end{proof}

Note that the right-hand side of \eqref{summ} does not depend on $r$, i.e., on the direction of the tree. Therefore the following result is an immediate consequence of Proposition~\ref{tw_graf_og}.
\begin{corollary}For any $r_1$, $r_2\in V$
$$\sum_{i\in V}c_i s_{i, (r_1)} = \sum_{i\in V}c_i s_{i, (r_2)}.$$
\end{corollary}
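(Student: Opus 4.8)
The plan is to read the corollary off directly from the identity \eqref{summ} established in Proposition~\ref{tw_graf_og}. The point I would emphasize at the outset is that although the transformed coordinates $s_{i,(r)}$ genuinely depend on the chosen root $r$ through the recursion \eqref{gw} — a child's contribution propagates toward the root, so reversing the orientation of the tree changes which vertices are descendants of which — the particular weighted combination $\sum_{i\in V} c_i\, s_{i,(r)}$ does not. This invariance is exactly what Proposition~\ref{tw_graf_og} makes transparent.

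Concretely, I would apply \eqref{summ} twice. Taking $r=r_1$ gives
\begin{equation}
\sum_{i\in V} c_i\, s_{i,(r_1)} = \sum_{S\subseteq T}\prod_{i\in V(S)} \frac{s_i}{c_i^{\deg_S(i)-1}}\prod_{\{j,k\}\in E(S)} c_{j,k},
\end{equation}
and taking $r=r_2$ yields the same expression with $r_1$ replaced by $r_2$ on the left-hand side. I would then observe that the right-hand side is assembled entirely from objects intrinsic to the \emph{undirected} tree $T$: the family of all subtrees $S\subseteq T$, their vertex sets $V(S)$, edge sets $E(S)$, and undirected degrees $\deg_S(i)$, together with the fixed parameters $c_i$, $c_{i,j}$ and the original coordinates $s_i$. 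None of these quantities refers to a root or to any orientation. Hence the two right-hand sides are literally identical, and equating the left-hand sides gives the claim.

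There is essentially no obstacle here, since all the combinatorial work was done in Proposition~\ref{tw_graf_og}; the only thing to verify is the root-independence of its right-hand side, which is visible by inspection. If one wished to avoid invoking \eqref{summ} and argue self-contained, I would instead prove invariance edge-by-edge: show that moving the root across a single edge $\{u,v\}$ (i.e.\ flipping that edge's orientation) leaves $\sum_{i\in V} c_i\, s_{i,(r)}$ unchanged, and then connect any two roots $r_1,r_2$ along the unique path joining them in $T$, flipping one edge at a time. Routing the proof through \eqref{summ} is, however, cleaner and is the intended argument.
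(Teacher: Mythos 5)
Your proof is correct and is exactly the paper's argument: the corollary is read off from Proposition~\ref{tw_graf_og} by applying \eqref{summ} with $r=r_1$ and $r=r_2$ and noting that the right-hand side involves only the undirected tree and is therefore root-independent. The optional edge-flipping alternative you sketch is not needed and is not what the paper does, but the main route you take matches the intended one.
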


\subsection{Independence property of tree-Kummer distribution}\label{TKum}

Let $T=(V,E)$, as above,  be a tree of size $p$. We define the set $C_T$ as the set of all symmetric matrices ${\bf C}=[c_{i,j}]_{i,j\in V}$ such that $c_{i,j}>0$, when $\{i,j\}\in E$, $c_{i,i}=c_i>0$ and $c_{i,j}=0$ otherwise. We say that a random vector $\mathbf{X}$ has \emph{tree-Kummer} distribution, $\mathrm{TK}(\textbf{a}, \textbf{C})$, where  $\mathbf{a}=(a_i, i\in V)\in \mathbb{R}^p_+$, and $\textbf{C}\in C_T$   if its density is of the form
$$
f(\mathbf{x}) \propto\prod _{i\in V} x_i ^{a_i-1} \exp\left(- \sum_{S\subseteq T}\,C(S)\,\prod_{\{j,k\}\in E(S)} c_{j,k} \right)\mathbbm{1}_{(0;\infty)^p}(\mathbf{x}),
$$
where
$$
C(S)=\prod_{i\in V(S)} \frac{x_i}{c_i^{\deg_S(i)-1}}.
$$
As it will be  seen later, if $v\in V$ is a leaf then the law of $X_v$ is Kummer. On the other hand for a degenerate tree of size $1$, the $\mathrm{TK}$ distribution is just a Gamma distribution. This distribution is an analogue of the $W_G^c$ distribution introduced in \cite{MW04}, Section~3, which is also Gamma for the degenerate tree of size $1$, and if ${\bf X}$ is a random vector with $W_G^c$ distribution, then for any leaf $v\in V$ the random variable $X_v$ has a GIG distribution.

Note that in the case of the tree $T$ which is a 1--2, chain the above density agrees with \eqref{kt2}. Thus, as observed in Section~\ref{sec:3.1}, the transformations $\Phi_1$ and $\Phi_2$  applied to a bivariate random vector with such density produce random vectors with independent Kummer and Gamma components. We will extend this observation to a tree-Kummer law generated by any tree.  It will lead to  a multivariate version of the independence property from \cite{HV15}.

\begin{theorem}\label{tw_og}
Let $T=(V, E)$ be a tree, $|V|=p\geq 2$, and $\mathbf{S}=(S_i, i\in V)$ be a random vector
following the tree-Kummer $\mathrm{TK}(\mathbf{a}, \mathbf{C})$ distribution with $\textbf{a}\in \mathbb{R}^p_+$,  $\mathbf{C}\in C_T$.
Denote $\mathbf{X}_r = \Phi_r(\mathbf{K})$, where $\Phi_r$ is the transformation defined in \eqref{frt} and \eqref{gw} with parameters $c_{i,j}$ and $c_i$, where $i,j,r\in V$.

Then, for any $r\in V$ the components of the random vector ${\mathbf{X}_r=(X_{i,(r)},i\in V)}$ are independent. Moreover,
$${X_{r,(r)}\sim \mathcal{G}(a_r, c_r)}\quad\mbox{and} \quad \frac{c_{\mathfrak{p}_r(i),i}}{c_{\mathfrak{p}_r(i)}}\,X_{i,(r)}\sim \mathcal{K}\left(a_i,a_{\mathfrak{p}_r(i)}-a_i,\frac{c_{\mathfrak{p}_r(i)}\,c_i}{c_{\mathfrak{p}_r(i),i}}\right),\quad i\in V\setminus\{r\}.$$
\end{theorem}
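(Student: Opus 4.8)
The plan is to compute the density of $\mathbf{X}_r=\Phi_r(\mathbf{S})$ directly by the change-of-variables formula and to show that it factorizes across the coordinates. The decisive first step is to rewrite the exponent of the tree-Kummer density using Proposition~\ref{tw_graf_og}: the double sum $\sum_{S\subseteq T}C(S)\prod_{\{j,k\}\in E(S)}c_{j,k}$ appearing in the density is precisely the right-hand side of \eqref{summ}, so it equals $\sum_{m\in V}c_m s_{m,(r)}$. Hence, writing $y_m=s_{m,(r)}$ for the transformed variables, the exponent becomes the linear expression $-\sum_{m\in V}c_m y_m$, which already separates over the coordinates.

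Next I would apply the transformation $\Phi_r$ via its inverse and Jacobian. Using $\Phi_r^{-1}(y_j,j\in V)=(s_i,i\in V)$ with $s_i=y_i/\prod_{j\in\mathfrak{C}_r(i)}(1+\tfrac{c_{i,j}}{c_i}y_j)$ and the Jacobian $J_r$ from \eqref{jakob}, the density of $\mathbf{X}_r$ is $|J_r(\mathbf{y})|$ times the tree-Kummer density evaluated at $\Phi_r^{-1}(\mathbf{y})$. Substituting the inverse into the monomial factor $\prod_{i\in V}s_i^{a_i-1}$ produces, for each $i$, a factor $y_i^{a_i-1}$ together with the denominator $\prod_{j\in\mathfrak{C}_r(i)}(1+\tfrac{c_{i,j}}{c_i}y_j)^{-(a_i-1)}$.

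The core of the argument is the bookkeeping that combines these denominators with the Jacobian. Each factor $1+\tfrac{c_{i,j}}{c_i}y_j$ is attached to the oriented edge from a parent $i$ to a child $j$; since every non-root vertex has a unique parent, this factor is indexed unambiguously by the child $j$ with $i=\mathfrak{p}_r(j)$. It occurs to the power $-(a_i-1)$ from the monomial substitution and to the power $-1$ from $J_r$, so the total exponent is $-a_{\mathfrak{p}_r(j)}$. After this cancellation the density of $\mathbf{X}_r$ becomes, up to a constant,
\[
\prod_{i\in V}y_i^{a_i-1}\;\prod_{j\in V\setminus\{r\}}\left(1+\frac{c_{\mathfrak{p}_r(j),j}}{c_{\mathfrak{p}_r(j)}}y_j\right)^{-a_{\mathfrak{p}_r(j)}}\exp\left(-\sum_{m\in V}c_m y_m\right),
\]
which is manifestly a product of functions of the single coordinates $y_i$; this yields the independence. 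I expect this reindexing---verifying that every edge factor is counted exactly once in each of the two sources and matching it to the correct child---to be the main (though purely combinatorial) obstacle.

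Finally I would read off the marginals from the one-dimensional factors. The factor for $i=r$ is $y_r^{a_r-1}e^{-c_r y_r}$, i.e. $\mathcal{G}(a_r,c_r)$. For $i\neq r$ the factor is $y_i^{a_i-1}(1+\tfrac{c_{\mathfrak{p}_r(i),i}}{c_{\mathfrak{p}_r(i)}}y_i)^{-a_{\mathfrak{p}_r(i)}}e^{-c_i y_i}$; rescaling by $Z_i=\tfrac{c_{\mathfrak{p}_r(i),i}}{c_{\mathfrak{p}_r(i)}}y_i$ and comparing with the Kummer density gives shape parameter $a_i$, exponent $a_{\mathfrak{p}_r(i)}$ on $(1+Z_i)$ so that $\beta=a_{\mathfrak{p}_r(i)}-a_i$, and rate $\tfrac{c_i c_{\mathfrak{p}_r(i)}}{c_{\mathfrak{p}_r(i),i}}$, which is exactly the claimed $\mathcal{K}(a_i,a_{\mathfrak{p}_r(i)}-a_i,\tfrac{c_{\mathfrak{p}_r(i)}c_i}{c_{\mathfrak{p}_r(i),i}})$.
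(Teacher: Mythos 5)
Your proposal is correct and follows essentially the same route as the paper's proof: apply Proposition~\ref{tw_graf_og} to linearize the exponent as $-\sum_m c_m s_{m,(r)}$, perform the change of variables via $\Phi_r^{-1}$ and the Jacobian \eqref{jakob}, and reindex the edge factors $\bigl(1+\tfrac{c_{i,j}}{c_i}y_j\bigr)$ by their unique child so that the exponents $-(a_{\mathfrak{p}_r(j)}-1)$ and $-1$ combine to $-a_{\mathfrak{p}_r(j)}$, after which the density visibly factorizes and the marginals are read off exactly as you describe.
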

This theorem may be viewed as an analogue of Theorem~3.1 in \cite{MW04}.

\begin{proof}
As for any $r\in V$ the map $\Phi_r$ is a diffeomorphism from $(0,\infty)^p$ onto $(0,\infty)^p$, the random vector $\mathbf{X}_r$ has a density $f_r$ of the form $$f_r(x_i, i\in V) = |J_r(x_i, i\in V)|f_T\left(\Phi^{-1}_r(x_i, i\in V)\right)\mathbbm{1}_{(0;\infty)^p}(\mathbf{x}).
$$
Proposition \ref{tw_graf_og} implies that for every vertex $r \in V$,
$$
\sum_{S\subseteq T}\prod_{i\in V(S)} \frac{x_i}{c_i^{\deg _S (i)-1}}\prod_{\{l,j\}\in E(S)} c_{l,j} =\sum_{i\in V} c_i \Phi _r^{(i)} (\mathbf{x}),
$$
where by $\Phi_r^{(i)}(\mathbf{x})$ we mean $x_{i,(r)}$ as defined in Eq.~\eqref{gw}, so ${\Phi_r(\cdot)=(\Phi_r^{(i)}(\cdot), i\in V)}$).

Thus, for $x_i>0$ and $i\in V$
\begin{eqnarray*}
f_T \{ \Phi_r^{-1}(x_i, i\in V)\} & \propto & \prod_{i\in V}
\left\{ \left(\Phi_r^{(-1)}\right)^{(i)}(\mathbf{x})\right\}^{a_i-1} \, \exp\left[ - \sum_{i\in V} c_i \Phi _r^{(i)} \{\Phi_r^{-1}(\mathbf x)\}\right]\\
&=& \prod_{i\in V} \left(\frac{x_i}{\prod_{j\in \mathfrak{C}_r(i)}(1+\frac{c_{i,j}}{c_i} x_j)}\right)^{a_i-1}\, \exp\left(-\sum_{i\in V} c_i x_i\right).\end{eqnarray*}

To compute the denominator of the above expression, note that for any set of numbers $(\alpha_{i,j},\,\{i,j\}\in E)$,
$$
\prod_{i\in V}\prod_{j\in \mathfrak{C}_r(i)} \alpha _{i,j} = \prod_{i\in V\setminus \{r\}} \alpha_{\mathfrak{p}_r(i),i}.
$$
Therefore,
$$
f_T \{ \Phi_r^{-1}(x_i, i\in V) \} \propto x_r^{a_r-1}e^{-c_r x_r} \prod_{i\in V\setminus\{r\}}  e^{ c_i x_i} x_i^{a_i -1}\left(1+\frac{c_{\mathfrak{p}_r(i),i}}{c_{\mathfrak{p}_r(i)}} x_i\right)^{-a_{\mathfrak{p}_r(i)}+1}.
$$
Calling also on the formula \eqref{jakob} for the Jacobian $J_r$, we see that the density of $\Phi_r(\mathbf{X})$ at $\mathbf{x}\in (0,\infty)^p$ is such that
$$
f_r(\mathbf{x}) \propto \,x_r^{a_r-1}e^{-c_r x_r}\mathbbm{1}_{(0;\infty)}(x_r)\prod_{i\in V\setminus\{r\}} x_i^{a_i-1} \left(1+\frac{c_{\mathfrak{p}_r(i), i}}{c_{\mathfrak{p}_r(i)}}x_i \right)^{-a_{\mathfrak{p}_r(i)}}e^{-c_i x_i}\mathbbm{1}_{(0;\infty)}(x_i).
$$
To conclude the proof, it suffices to note that the right-hand side above is a product of densities of distributions of $X_{i, (r)}$, $i\in V$, as stated in the theorem.
\end{proof}

\begin{remark}
Anticipating the characterization result of the next section, we remark that if in the above theorem we take ${\bf S}\sim\mathrm{TK}(\mathbf{a}, c\mathbf{C})$ for a positive number $c>0$, then with $\Phi_r$ defined by $\mathbf{C}$ as above we will get independent $X_{i,(r)}$, $i\in V$, with
$$
{X_{r,(r)}\sim \mathcal{G}(a_r, cc_r)}\quad\mbox{and} \quad \frac{c_{\mathfrak{p}_r(i),i}}{c_{\mathfrak{p}_r(i)}}\,X_{i,(r)}\sim \mathcal{K}\left(a_i,a_{\mathfrak{p}_r(i)}-a_i,c\frac{c_{\mathfrak{p}_r(i)}\,c_i}{c_{\mathfrak{p}_r(i),i}}\right),\quad i\in V\setminus\{r\}.
$$
\end{remark}

As in \cite{MW04}, we illustrate Theorem \ref{tw_og} with two examples.

\begin{example} Let $T$ be 1--2--3 chain and $c_1=c_2=c_3=c_{1,2}=c_{2,3}=c$. Let ${\bf S}=(S_1,S_2,S_3)$ have the respective tree-Kummer distribution and $\Phi_1, \Phi_2, \Phi_3$ are defined by Eq.~\eqref{frt} and \eqref{gw}. Then from Theorem~\ref{tw_og} we conclude that
\begin{multline*}
\Phi_1({\bf S})=\left(S_1(1+S_2(1+S_3)),\,S_2(1+S_3),\,S_3\right) \\
\sim\mathcal{G}(a_1,c)\otimes\mathcal{K}(a_2,a_1-a_2,c)\otimes\mathcal{K}(a_3,a_2-a_3,c),
\end{multline*}
and
$$
\Phi_2({\bf S})=\left(S_1,\,S_2(1+S_1)(1+S_3),\,S_3\right)\sim\mathcal{K}(a_1,a_2-a_1,c)\otimes\mathcal{G}(a_2,c)\otimes\mathcal{K}(a_3,a_2-a_3,c).
$$
\end{example}

\begin{example}
Let $T$ be a ``daisy'' on four vertices, $V=\{1,2,3,4\}$ with edges $E=\{$1--4, 2--4, 3--4$\}$ and $c_i=c_{i,j}=c$ for all $i\in V$. Assume that ${\bf S}=(S_1,S_2,S_3,S_4)$ has a respective tree-Kummer distribution and $\Phi_r$, $r\in V$, are defined by Eq.~\eqref{frt} and \eqref{gw}. Then from Theorem~\ref{tw_og} we conclude that
\begin{multline*}
\Phi_1({\bf S})=\left(S_1(1+S_4(1+S_2)(1+S_3)),\,S_2,\,S_3,\,S_4(1+S_2)(1+S_3)\right) \\ \sim\mathcal{G}(a_1,c)\otimes\mathcal{K}(a_2,a_4-a_2,c)\otimes
\mathcal{K}(a_3,a_4-a_3,c)\otimes\mathcal{K}(a_4,a_1-a_4,c)
\end{multline*}
and
\begin{multline*}
\Phi_4({\bf S})=\left(S_1,\,S_2,\,S_3,\,S_4(1+S_1)(1+S_2)(1+S_3)\right) \\ \sim\mathcal{K}(a_1,a_4-a_1,c)\otimes\mathcal{K}(a_2,a_4-a_2,c)\otimes
\mathcal{K}(a_3,a_4-a_3,c)\otimes\mathcal{G}(a_4,c).
\end{multline*}
\end{example}

\section{Multivariate characterization of Kummer and Gamma laws}
\label{sec:4}

Let us consider first a symmetrized and slightly generalized version of Theorem~\ref{tw1}, which can be viewed as a characterization of the simplest tree-Kummer distribution for the tree being a 1--2 chain.

\begin{theorem}\label{sym2} Let $(S_1,S_2)$ be a random vector having a positive and continuously differentiable density $f$ on $(0,\infty)^2$. For positive numbers $c_1$, $c_2$ and $c_{1,2}$ define the bijections $\Phi_1$ and $\Phi_2$ as in Section~\ref{sec:3.1}. Assume that random vectors $\mathbf{X}_{(1)} = (X_{1,(1)}, X_{2,(1)})=\Phi_1(S_1,S_2)$ and $\mathbf{X}_{(2)}=(X_{1,(2)}, X_{2,(2)})=\Phi_2(S_1,S_2)$ have independent components. Then there exist positive numbers $a_1$, $a_2$ and $c$ such that
$$
f(s_1,s_2)\propto s_1^{a_1-1}s_2^{a_2-1}\,e^{-c(c_1s_1+c_2s_2+c_{1,2}s_1s_2)}\,\mathbbm{1}_{(0;\infty)^2}(s_1,s_2).
$$
Equivalently,
$$
X_{1,(1)}\sim\mathcal{G}(a_2,cc_1),\qquad\,\frac{c_{1,2}}{c_1}\,X_{2,(1)}\sim\mathcal{K}\left(a_1,a_2-a_1,c\frac{c_1c_2}{c_{1,2}}\right)
$$
or
$$
\frac{c_{1,2}}{c_2}X_{2,(1)}\sim\mathcal{K}\left(a_2,a_1-a_2,c\frac{c_1c_2}{c_{1,2}}\right),\qquad\,X_{2,(2)}\sim\mathcal{G}(a_1,cc_2).
$$
\end{theorem}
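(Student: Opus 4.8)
The plan is to reduce Theorem~\ref{sym2} to Theorem~\ref{tw1} by a single diagonal rescaling of the coordinates that absorbs the parameters $c_1,c_2,c_{1,2}$ and turns $\Phi_1,\Phi_2$ into the parameter-free maps $\Psi_1(s_1,s_2)=(s_1(1+s_2),s_2)$ and $\Psi_2(s_1,s_2)=(s_1,s_2(1+s_1))$, i.e.\ the maps $\Phi_1,\Phi_2$ of Section~\ref{sec:3.1} in the case $c_1=c_2=c_{1,2}=1$. The point is that Theorem~\ref{tw1} already contains all the analytic work (the functional equation and its solution), so once the two independence hypotheses here are matched to the two independence hypotheses there, the result follows.

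First I would set $\tilde S_1=\frac{c_{1,2}}{c_2}\,S_1$ and $\tilde S_2=\frac{c_{1,2}}{c_1}\,S_2$ and apply the same diagonal scaling to the outputs. A direct computation gives $\frac{c_{1,2}}{c_2}\,S_1\bigl(1+\frac{c_{1,2}}{c_1}S_2\bigr)=\tilde S_1(1+\tilde S_2)$ and, symmetrically, $\frac{c_{1,2}}{c_1}\,S_2\bigl(1+\frac{c_{1,2}}{c_2}S_1\bigr)=\tilde S_2(1+\tilde S_1)$, so that in the tilded coordinates $\Phi_1$ and $\Phi_2$ become exactly $\Psi_1$ and $\Psi_2$. (Note that $\Phi_1$ forces only the scaling factor of the second coordinate and $\Phi_2$ only that of the first, so the two normalizations are mutually consistent.) Because scaling each coordinate separately is a diagonal diffeomorphism, independence of the components of $\Phi_r(S_1,S_2)$ is equivalent to independence of the components of $\Psi_r(\tilde S_1,\tilde S_2)$, for $r=1,2$; hence it suffices to treat $c_1=c_2=c_{1,2}=1$.

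Next I would identify the hypotheses with those of Theorem~\ref{tw1}. Putting $X=\tilde S_1$ and $Y=\tilde S_2(1+\tilde S_1)$ gives $(X,Y)=\Psi_2(\tilde S_1,\tilde S_2)$, so the $\Phi_2$-assumption is precisely the independence of $X$ and $Y$; moreover $Y/(1+X)=\tilde S_2$ and $X\bigl(1+Y/(1+X)\bigr)=\tilde S_1(1+\tilde S_2)$, so $T(X,Y)$ equals $\Psi_1(\tilde S_1,\tilde S_2)$ up to the order of its components, and the $\Phi_1$-assumption is the independence of $U$ and $V$. The regularity hypotheses transfer cleanly: since $f$ is positive and continuously differentiable and $\Psi_2$ is a diffeomorphism, the joint density of $(X,Y)$ is positive and $C^1$, and, being a product by the $\Phi_2$-independence, each of its two factors---the marginals of $X$ and $Y$---is positive and continuously differentiable on $(0,\infty)$. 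Thus every assumption of Theorem~\ref{tw1} holds, and I obtain $X\sim\mathcal{K}(a,b-a,\gamma)$, $Y\sim\mathcal{G}(b,\gamma)$ for some $a,b,\gamma>0$.

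It remains to push this back to $f$. As computed at the beginning of Section~\ref{sec:3.1}, $(\tilde S_1,\tilde S_2)=(X,Y/(1+X))$ then has density proportional to $\tilde s_1^{a-1}\tilde s_2^{b-1}e^{-\gamma(\tilde s_1+\tilde s_2+\tilde s_1\tilde s_2)}$; substituting $\tilde s_1=\frac{c_{1,2}}{c_2}s_1$, $\tilde s_2=\frac{c_{1,2}}{c_1}s_2$ and absorbing the constant prefactors into the normalization yields
$$
f(s_1,s_2)\propto s_1^{a-1}s_2^{b-1}\,\exp\!\left\{-\gamma\,\frac{c_{1,2}}{c_1c_2}\bigl(c_1s_1+c_2s_2+c_{1,2}s_1s_2\bigr)\right\},
$$
which is the asserted form with $a_1=a$, $a_2=b$ and $c=\gamma\,c_{1,2}/(c_1c_2)$; the equivalent distributional statements are then read off from \eqref{x1}--\eqref{x2}. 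I do not expect a genuinely hard step, as Theorem~\ref{tw1} carries the weight; the only points requiring care are checking that one diagonal scaling simultaneously normalizes both $\Phi_1$ and $\Phi_2$, and the transfer of the ``positive $C^1$ marginal'' regularity through the factorization. The final matching of constants is routine.
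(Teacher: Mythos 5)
Your proof is correct and follows essentially the same route as the paper: both reduce the statement to Theorem~\ref{tw1} by rescaling the coordinates by the factors $c_{1,2}/c_1$, $c_{1,2}/c_2$ so that the $\Phi_1$- and $\Phi_2$-independences become the $(X,Y)$- and $(U,V)$-independences of that theorem, and then transport the resulting Kummer/Gamma product density back to $(S_1,S_2)$ via the change of variables of Section~\ref{sec:3.1}. The only (immaterial) difference is that you feed $\Phi_2(S_1,S_2)$ into the $(X,Y)$ slot and $\Phi_1(S_1,S_2)$ into the $(U,V)$ slot, whereas the paper does the reverse; since $T$ is an involution the two are interchangeable, and your explicit handling of the regularity transfer and of the consistency of the single diagonal rescaling is a welcome extra.
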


\begin{proof}
Note that $X= c_{1,2}X_{2,(1)}/c_1$ and $Y=c_{1,2}X_{1,(1)}/c_2$ are independent. Define $U={Y}/(1+X)$ and $V=X(1+U)$. Then $U={c_{1,2}}X_{1,(2)}/c_1$ and $V={c_{1,2}}X_{2,(2)}/c_2$. Consequently, they are also independent. Now by Theorem~\ref{tw1} it follows that there exist positive numbers $a_1$, $a_2$ and $c'$ such that $X\sim \mathcal{K}(a_1,a_2-a_1,c')$ and $Y\sim\mathcal{G}(a_2,c')$; furthermore, $U\sim\mathcal{G}(a_1,c')$ and $V\sim \mathcal{K}(a_2,a_1-a_2,c')$. Then it is easily seen that the joint density $f$ of $(S_1,S_2)$ has the form
$$
f(s_1,s_2)\propto s_1^{a_1-1}s_2^{a_2-1}e^{-c'\frac{c_{1,2}}{c_1c_2}\left(c_1s_1+c_2s_2+c_{1,2}s_1s_2\right)}\,\mathbbm{1}_{(0;\infty)^2}(s_1,s_2),
$$
and the result follows upon taking $c=c'{c_{1,2}}/(c_1c_2)$.
\end{proof}

The above result provides a characterization of the bivariate tree-Kummer distribution of ${\bf S}=(S_1,S_2)$ for a tree being the 1--2 chain. Equivalently, this result can be interpreted as a characterization of univariate Kummer and Gamma distributions by considering the (properly scaled) components of $\Phi_1({\bf S})$ and $\Phi_2({\bf S})$.

The aim of this section is to extend the above characterization to any tree. The result we obtain is analogous to the characterization of GIG and Gamma distribution obtained in a tree setting in \cite{MW04}. The analogue of the tree-Kummer distribution used here is a tree-GIG distribution called the $W^c_G$ distribution in \cite{MW04}.

\begin{theorem}\label{tw_char}
Let $T=(V,E)$ be a tree of size $p$. Let $\mathbf{C}\in C_T$ and $\Phi_r$ be defined by \eqref{frt} and \eqref{gw}. Let $\mathbf{X}=(X_i, i\in V)$ be a $p$-dimensional random vector with positive density, which is continuously differentiable on its support $(0,\infty)^p$. Suppose that for every  $r\in V$, which is a leaf of $T$, the components of the random vector $\mathbf{X}_{(r)}=\Phi_r(\mathbf{X})= (X_{i,(r)},i\in V)$ are independent. Then there exist $\textbf{a}=(a_i, i\in V)\in (0,\infty )^p$ and $c>0$ such that $X_{r,(r)}\sim \mathcal{G}(a_r, cc_r)$ and
$$
\frac{c_{\mathfrak{p}_r(i),i}}{c_{\mathfrak{p}_r(i)}}\,X_{i,(r)}\sim \mathcal{K}\left(a_i,a_{\mathfrak{p}_r(i)}-a_i,c\frac{c_{\mathfrak{p}_r(i)}\,c_i}{c_{\mathfrak{p}_r(i),i}}\right)\; \mbox{for } i\in V\backslash \{r\}.
$$ Consequently, $\textbf{X}\sim \mathrm{TK}(\mathbf{a}, c \mathbf{C})$.
\end{theorem}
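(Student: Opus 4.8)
The plan is to induct on the size $p=|V|$, the base case $p=2$ being exactly Theorem~\ref{sym2}. For the inductive step, fix a leaf $\ell$ with unique neighbour $m$, set $V'=V\setminus\{\ell\}$, let $T'$ be the induced subtree on $V'$, write $\mathbf X'=(X_i)_{i\in V'}$, and abbreviate $\beta=c_{\ell,m}/c_\ell$ and $\gamma=c_{\ell,m}/c_m$. The computation rests on two identities that follow at once from \eqref{gw} because $\ell$ is a leaf: rooting $T$ at $\ell$ one has $X_{m,(\ell)}=X_{m,(m)}^{T'}=:W$, hence
\[
X_{\ell,(\ell)}=X_\ell(1+\beta W)=:G,\qquad X_{i,(\ell)}=X_{i,(m)}^{T'}\ \ (i\neq\ell),
\]
whereas rooting $T$ at a leaf $r\neq\ell$ leaves $X_{\ell,(r)}=X_\ell$ and alters only the $m$-coordinate, $X_{m,(r)}=X_{m,(r)}^{T'}(1+\gamma X_\ell)$, every other coordinate coinciding with the corresponding coordinate of $\Phi_r^{T'}(\mathbf X')$.

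First I would transfer the hypotheses to $T'$. Rooting at $\ell$ shows that $G$ is independent of $\Phi_m^{T'}(\mathbf X')=(X_{i,(m)}^{T'})_{i\in V'}$ and that the components of the latter are independent; this already provides the required hypothesis at $m$ (should $m$ be a leaf of $T'$). For a leaf $r\neq\ell$ of $T$ the components of $\Phi_r^{T}(\mathbf X)$ are $(X_{i,(r)}^{T'})_{i\neq m}$ together with $X_{m,(r)}^{T'}(1+\gamma X_\ell)$; these being mutually independent, and $X_{m,(r)}^{T'}$ being recoverable from the pair $(X_\ell,X_{m,(r)}^{T'}(1+\gamma X_\ell))$, the family $(X_{i,(r)}^{T'})_{i\in V'}=\Phi_r^{T'}(\mathbf X')$ is again mutually independent. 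Thus every leaf of $T'$ satisfies the independence hypothesis for $\mathbf X'$. Granting that the marginal density of $\mathbf X'$ is again positive and continuously differentiable --- a regularity point to be settled separately, in the spirit of the local-integrability reduction used after Theorem~\ref{tw1} --- the inductive hypothesis yields $\mathbf X'\sim\mathrm{TK}(\mathbf a',c'\mathbf C')$ for some positive $\mathbf a'=(a_i)_{i\in V'}$ and $c'>0$, $\mathbf C'$ being the restriction of $\mathbf C$. By Theorem~\ref{tw_og} applied to $T'$ rooted at $m$, $W=X_{m,(m)}^{T'}\sim\mathcal G(a_m,c'c_m)$ is a known Gamma variable.

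It remains to identify the law of $X_\ell$. Rooting at $\ell$ gives $G\perp\mathbf X'$, so
\[
f(\mathbf x)\propto f_{\mathbf X'}(\mathbf x')\,(1+\beta w)\,f_G\big(x_\ell(1+\beta w)\big),\qquad w=X_{m,(m)}^{T'}(\mathbf x'),
\]
with $f_{\mathbf X'}$ already known and $f_G$ the only unknown; note that this single condition holds for \emph{every} density $f_G$, so it cannot by itself determine the law of $G$. To pin $f_G$ down I would bring in the independence at a second leaf $r\neq\ell$ (which exists since $p\geq3$). Substituting the displayed factorisation into the requirement that $\Phi_r(\mathbf X)$ have a product density, and using that $f_{\mathbf X'}$ is \emph{already} carried to a product of Gamma and Kummer densities by $\Phi_r^{T'}$ (Theorem~\ref{tw_og}), all factors depending on the known law of $\mathbf X'$ can be removed, leaving a functional equation whose only unknown is $\ln f_G$. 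The limiting analysis used in the proof of Theorem~\ref{tw1} (sending the transformed leaf coordinate to $0$ and to $\infty$) then forces $\ln f_G$ to be of the form $(a_\ell-1)\ln(\cdot)-cc_\ell(\cdot)+\text{const}$, i.e. $G\sim\mathcal G(a_\ell,cc_\ell)$, and the matching of the rates simultaneously forces $c'=c$.

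The \textbf{main obstacle} is precisely this last step, and its difficulty is structural: the two rootings invoked, at $\ell$ and at $r$, act on the coordinate attached to $m$ through \emph{different} subtrees of $T'$ --- rooting at $\ell$ produces $W=X_{m,(m)}^{T'}$, while rooting at $r$ produces $X_{m,(r)}^{T'}$, and the two are related by the re-rooting factor $W=X_{m,(r)}^{T'}\big(1+\tfrac{c_{m,\,\mathfrak p_r(m)}}{c_m}X^{T'}_{\mathfrak p_r(m),(m)}\big)$, which is itself random. Consequently the two independence statements do not refer to a common bivariate pair and one cannot simply quote Theorem~\ref{sym2}; disentangling the resulting equation, using the explicit form of $f_{\mathbf X'}$, is where the real work lies. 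Once $f_G=\mathcal G(a_\ell,cc_\ell)$ and $c'=c$ are established, the proof concludes quickly: by Proposition~\ref{tw_graf_og} the subtrees of $T$ avoiding $\ell$ reproduce the exponent of $\mathbf X'$, while those through $\ell$ contribute exactly $c_\ell x_\ell(1+\beta w)=c_\ell\,x_{\ell,(\ell)}$, so $f$ has the $\mathrm{TK}(\mathbf a,c\mathbf C)$ form with $\mathbf a=(\mathbf a',a_\ell)$; the remaining per-leaf distributional statements then follow from the forward Theorem~\ref{tw_og}.
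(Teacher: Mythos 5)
Your overall strategy---induction on $p$ with base case Theorem~\ref{sym2}, deleting a leaf to invoke the inductive hypothesis---is the same as the paper's, but your reduction step contains a genuine error, and the step you yourself flag as the ``main obstacle'' is precisely where the paper's argument takes a different (and necessary) route. The error: you take $\mathbf X'=(X_i)_{i\in V'}$ to be the plain restriction of $\mathbf X$ and claim that, for a leaf $r\neq\ell$, rooting $T$ at $r$ ``alters only the $m$-coordinate,'' every other coordinate of $\Phi_r^{T}(\mathbf X)$ coinciding with the corresponding coordinate of $\Phi_r^{T'}(\mathbf X')$. This is false for every vertex on the path from $r$ to $m$ other than $m$ itself: the recursion \eqref{gw} propagates the extra factor $(1+\gamma X_\ell)$ sitting inside $X_{m,(r)}$ upward through all ancestors of $m$ in $T_r$. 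Concretely, for the chain 1--2--3 with $\ell=3$, $m=2$, $r=1$ and unit constants, $X_{1,(1)}^{T}=X_1\bigl(1+X_2(1+X_3)\bigr)$ while $X_{1,(1)}^{T'}=X_1(1+X_2)$. Consequently $\Phi_r^{T'}(\mathbf X')$ is not obtained from the independent components of $\Phi_r^{T}(\mathbf X)$ by a map acting separately on disjoint blocks of coordinates, and its components cannot be asserted to be independent; the transfer of the hypothesis to $T'$ collapses. The paper's fix is exactly the device you omitted: before deleting the leaf, replace the neighbour coordinate by $\widetilde X_m=X_m\bigl(1+\tfrac{c_{\ell,m}}{c_m}X_\ell\bigr)$. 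With this modified vector all coordinates of $\Phi_r^{\widetilde T}(\widetilde{\mathbf X})$ coincide with those of $\Phi_r^{T}(\mathbf X)$ for every leaf $r\neq\ell$, and when rooting at $m$ the only changed coordinate is a function of the pair $\bigl(X_{\ell,(\ell)},X_{m,(\ell)}\bigr)$, so independence does transfer.

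The unexecuted step: after a single leaf removal the law of $G=X_{\ell,(\ell)}$ and the identification $c'=c$ remain open, and you only gesture at a functional equation to be resolved by the limiting analysis of Theorem~\ref{tw1}; as you admit, the two rootings involved are coupled through a random re-rooting factor, so this does not reduce to Theorem~\ref{sym2}, and no argument is actually given. The paper avoids solving any new functional equation here: it removes \emph{two} distinct leaves $n$ and $m$, applies the inductive hypothesis to both reduced trees, and notes that between the two resulting families of marginals every coordinate's law is pinned down by at least one of them. The two parameter sets are then reconciled by comparing the third Kummer parameters (giving $c^{(n)}=c^{(m)}$) and the shape parameters, with separate arguments---comparing the powers of $s_{n_1}$ in the two product-density representations of the joint density---for the degenerate configurations (the daisy, where all leaves share a common neighbour, and the 1--2--3 chain). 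You should adopt this double-removal scheme; with it, the concluding assembly of the $\mathrm{TK}(\mathbf a,c\mathbf C)$ density via Proposition~\ref{tw_graf_og} and Theorem~\ref{tw_og} goes through as you describe.
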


\begin{proof}
The proof will be by induction with respect to $p=|V|$. The case $p=2$ is given in Theorem~\ref{sym2}.  For fixed $p>2$, we assume the assertion of the theorem to be true for every tree of size up to $p-1$ and we consider an arbitrary tree of size $p$.

Let $n$ and $m$ be vertices from the set of leaves $L\subset V$ of tree $T$ (of course, $L$ consists of at least two elements). Given that $n$ and $m$ are leaves, each of them has precisely one neighbor, $n_1$ and $m_1$, respectively.

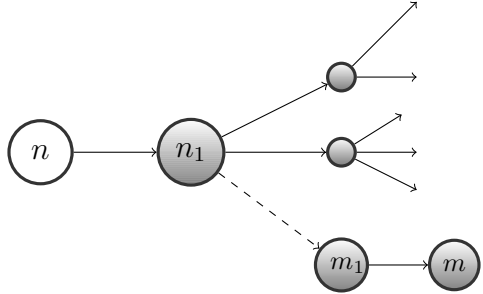
\begin{figure}[h]
\begin{tikzpicture}
\tikzstyle{male} = [  shape=circle, minimum size=0.05cm,text=black,
very thick, draw=black!80,  top color=white,bottom color=black!50,  text width=0.3cm, align=center]
\tikzstyle{male1} = [  shape=circle, minimum size=0.05cm,text=black,
very thick, draw=black!80, top color=white,bottom color=black!50,   text width=0.05cm, align=center]
\tikzstyle{mynodestyle} = [ font={\Large\bfseries}, shape=circle, minimum size=0.1cm,text=black,
very thick, draw=black!80,text width=0.5cm, align=center]
\tikzstyle{duzy} = [ font={\Large\bfseries}, shape=circle, minimum size=0.2cm,text=black,
very thick, draw=black!80, top color=white,bottom color=black!50, text width=0.5cm, align=center]
\node[ mynodestyle] (v1) at (-4,3) {$n$};
\node[ duzy] (v3) at (-2,3) {$n_1$};
\node[male1] (v2) at (0,4) {};
\draw[ ->] (v1) -- (v3);
\node [male1] at (0,3) (v4){};
\node (m1) [male] at (0,1.5) {$m_1$};
\node (m) [male] at (1.5,1.5){$m$};
\draw[->, dashed] (v3)--(m1);
\draw[->] (v2) -- (1,5);
\draw[ ->] (v2) -- (1,4);
\draw[->] (v4) -- (1,3);
\draw[->] (v4) -> (1,2.5);
\draw[->] (v4)-> (0.8,3.5);
\draw[->] (v3) -- (v4);
\draw[->] (v3) -- (v2);
\draw[->] (m1) -- (m);
\end{tikzpicture}\caption{$T_n$ with the subtree $\widetilde{T}_{n_1}$ colored grey.}\label{fig.2}
\end{figure}

In order to make use of the induction assumption we will remove one vertex, $n$, from $T$ together with the edge $\{n,n_1\}$ and denote such a reduced tree of size $p-1$ by $\widetilde{T}$. Obviously, $\widetilde{T}$ is a tree. Let us introduce a random vector $\mathbf{\widetilde X}= (\widetilde{X}_{j})_{j\in V\setminus\{n\}}$, where  $\widetilde X_j = X_j$ for $j\in V\setminus\{n,n_1\}$ and $ \widetilde X_{n_1} = X_{n_1} (1 + {c_{n_1 n}}{} X_n/c_{n_1})$. Note that $\mathbf{\widetilde X}$ takes on values in $(0,\infty)^{p-1}$.

Let $\widetilde{\Phi}_r$ , $r\in V\setminus \{n\}$, be a transformation defined by $\widetilde T$ according to \eqref{gw} with the same parameters $c_{i,j}$, $c_i$ but for $i,j\in V\setminus \{n\}$. We set
$$
\widetilde {\textbf{X}}_{ (r)}=( \widetilde X_{1, (r)},\ldots, \widetilde X_{p-1, (r)})= \widetilde{\Phi}_r(\widetilde{\textbf{X}}).
$$

Let $n_1$ be the root in $\widetilde{T}$. Clearly, $\widetilde T_{n_1}$ is a directed subtree of $T_n$; see Figure \ref{fig.2}. Therefore  $\mathfrak{C}_{n_1}^{\widetilde{T}}(n_1) = \mathfrak{C}_n^T(n_1)$ and $\widetilde X_{j, (n_1)}=X_{j, (n)} $ for $j\neq n_1$. Using these facts and \eqref{gw} we deduce that
\begin{multline*}
 \widetilde X_{n_1, (n_1)} =X_{n_1}\left(1+\frac{c_{n_1,n}}{c_{n_1}} X_n\right)\prod_{j\in \mathfrak{C}^{\widetilde T}_{n_1}(n_1)} \left(1+\frac{c_{n_1,j}}{c_{n_1}}\widetilde X_{j, (n_1)}\right) \\ =\left(1+\frac{c_{n_1 n}}{c_{n_1}} X_n\right)X_{n_1}\prod_{j\in \mathfrak{C}^T_n(n_1)} \left(1+\frac{c_{n_1,j}}{c_{n_1}}X_{j, (n_1)}\right)
=X_{n_1, (n)} \left(1+\frac{c_{n_1 n}}{c_{n_1}} X_n\right).
\end{multline*}
Consequently,
$$\widetilde X_{n_1, (n_1)} = \left(1+\frac{c_{n_1,n}}{c_{n_1}} \frac{X_{n, (n)}}{1+\frac{c_{n_1,n}}{c_n} X_{n_1, (n)}}\right)\,X_{n_1, (n)} .
$$
The latter equality, together with the independence of the components of $\textbf{X}_{(n)}$, entails the mutual independence of the components of $\widetilde {\mathbf{X}}_{(n_1)}$.

We also note that for any $\ell \in L\setminus \{n\}$, $X_{j, (\ell)} = \widetilde X_{j, (\ell)}$ for $j\in V\setminus\{n\}$, i.e., $\widetilde{X}_\ell$ is a subvector of ${\bf X}_\ell$ and thus
the components of $\widetilde{\mathbf{X}}_{(\ell)}$ are also independent.
The set of leaves of $\widetilde{T}$ is a subset of $L\cup \{n_1\}$, so eventually we get a $(p-1)$-dimensional random vector $\widetilde{\textbf{X}}$, for which the assumptions of Theorem~\ref{tw_char} are satisfied. Hence,  in particular, there exists a vector of positive components ${\bf a}^{(n)}=(a_i^{(n)},\,i\in V\setminus\{n\})$ and $c^{(n)}>0$ such that for $j\notin \{m,n\}$,
\bel{jeden}
\frac{c_{\mathfrak{p}_m(j),j}}{c_{\mathfrak{p}_m(j)}}\,\widetilde X_{j, (m)} = \frac{c_{\mathfrak{p}_m(j),j}}{c_{\mathfrak{p}_m(j)}}\,X_{j, (m)}\sim  \mathcal{K}\left(a_j^{(n)}, a_{\mathfrak{p}_m(j)}^{(n)} - a_j^{(n)}, c^{(n)}\frac{c_{\mathfrak{p}_m(j)}\,c_j}{c_{\mathfrak{p}_m(j),j}}\right),
\ee
(here we used the fact that $\mathfrak{p}^{\widetilde T}_m(j)=\mathfrak{p}^T_m(j)=:\mathfrak{p}_m(j)$ for $j\notin \{m,n\}$),
\bel{dwa}
\widetilde X_{m, (m)} = X_{m, (m)}\sim \mathcal{G}(a_m^{(n)},c^{(n)} c_m)
\ee
and for $j\notin \{n, n_1\}$,
\bel{trzy}
\frac{c_{\mathfrak{p}_n(j),j}}{c_{\mathfrak{p}_n(j)}}\,\widetilde X_{j, (n_1)} =\frac{c_{\mathfrak{p}_n(j),j}}{c_{\mathfrak{p}_n(j)}}\, X_{j, (n)}\sim  \mathcal{K}\left(a_j^{(n)}, a_{\mathfrak{p}_n(j)}^{(n)} - a_j^{(n)}, c^{(n)}\frac{c_{\mathfrak{p}_n(j)}\,c_j}{c_{\mathfrak{p}_n(j),j}}\right)
\ee
(here we used the fact that $\mathfrak{p}^{\widetilde T}_{n_1}(j)=\mathfrak{p}^T_n(j)=:\mathfrak{p}_n(j)$ for $j\notin \{n_1,n\}$ and that it does not matter if $n_1$ is  a leaf in $\widetilde{T}$).

Changing roles of nodes $n$ and $m$ (i.e., removing vertex $m$ instead of $n$ from tree $T$) and then proceeding as before for the random vector
$\widetilde{ \textbf{X}} =(\widetilde X_j)_{j\in V\setminus\{m\}}$, where $\widetilde X_j=X_j$ for $j\notin \{m, m_1\}$ and $\widetilde X_{m_1}=X_{m_1} (1+ {c_{m_1,m}}X_m/c_{m_1})$ for $j\notin \{ m,n\}$, we get
\bel{cztery}
\frac{c_{\mathfrak{p}_n(j),j}}{c_{\mathfrak{p}_n(j)}}\,\widetilde X_{j, (n)} =\frac{c_{\mathfrak{p}_n(j),j}}{c_{\mathfrak{p}_n(j)}}\, X_{j, (n)}\sim  \mathcal{K}\left(a_j^{(m)}, a_{\mathfrak{p}_n(j)}^{(m)} - a_j^{(m)}, c^{(m)}\frac{c_{\mathfrak{p}_n(j)}\,c_j}{c_{\mathfrak{p}_n(j),j}}\right),
\ee
\bel{piec}
\widetilde X_{n, (n)} = X_{n, (n)}\sim \mathcal{G}(a_n^{(m)}, c^{(m)}c_n),
\ee
and for $j\notin \{m, m_1\}$,
\bel{szesc}
\frac{c_{\mathfrak{p}_m(j),j}}{c_{\mathfrak{p}_m(j)}}\,\widetilde X_{j, (m_1)} = \frac{c_{\mathfrak{p}_m(j),j}}{c_{\mathfrak{p}_m(j)}}\,X_{j, (m)}\sim  \mathcal{K}\left(a_j^{(m)}, a_{\mathfrak{p}_m(j)}^{(m)} - a_j^{(m)}, c^{(m)}\frac{c_{\mathfrak{p}_m(j)}\,c_j}{c_{\mathfrak{p}_m(j),j}}\right).\ee

In the next step we are going to show that $a_j^{(n)}=a_j^{(m)}$ for every $j\in V$ and that $c^{(n)}=c^{(m)}$ for any leaves $m$, $n$.

To see that $c^{(n)}=c^{(m)}$ for any leaves $m$, $n$, it suffices to compare third parameters in \eqref{jeden} and \eqref{szesc}  --- or \eqref{trzy} and \eqref{cztery} --- since we chose above arbitrary leaves $m,n$. Furthermore, it follows from \eqref{trzy} and \eqref{cztery}  that $a_j^{(n)} = a_j^{(m)}$ for any $j\notin \{m, n, n_1\}$ and from \eqref{jeden} and \eqref{szesc} that $a_j^{(n)} = a_j^{(m)}$  for $j\notin  \{m, n, m_1\}$ for any choice of leaves $m$, $n$.

Note that in any tree except the "daisy" one can choose leaves $m$, $n$ in such a way that $n_1\ne m_1$. Then $a_j^{(n)}=a_j^{(m)}=:a_j$ for any $j\notin \{m,n\}$. For $j=n$, $m$ we just write $a_m:=a_m^{(n)}$ and $a_n:=a_n^{(m)}$.

If $n_1=m_1$ for any leaves $n,m$, we show the equality $a_{n_1}^{n}=a_{n_1}^m$ using the assumption of independence of components of $\mathbf{X}_{(r)}$ for $r\in L$. We have
\bel{eq_ind}
|J_{\Phi_m}(s_i, i\in V)| \prod_{i\in V}f_{i,(m)}(s_{i,(m)}) =|J_{\Phi_n}(s_i, i\in V)| \prod_{i\in V}f_{i,(n)}(s_{i,(n)})
\ee
where $f_{i,(\ell)}$ is the density of $X_{i,(\ell)}$, $i\in V$, $\ell = m,n$.

Now we compare the powers of $s_{n_1}$ on the left and the right-hand side of \eqref{eq_ind}. Note that $s_{n_1}$ appears only in
$$
f_{n_1, (\ell)}\left(s_{n_1}\prod_{j\in\C_\ell (n_1)}(1+\frac{c_{n_1,j}}{c_{n_1}}s_{j,(\ell)})\right),
$$
for all $\ell \in \{m,n\}$. In particular, see \eqref{cztery} and \eqref{jeden}, it is raised to the power $a_{n_1}^{(m)}$ in $f_{n_1, (m)}$ and to the power $a_{n_1}^{(n)}$ in $f_{n_1, (n)}$. Consequently, \eqref{eq_ind} yields $a_{n_1}^{(m)}=a_{n_1}^{(n)}=:a_{n_1}$. Now if there are at least three leaves we conclude that $a_j^{(m)}=a_j$ for any $j$ and for any leaf $m$.

In the case of the chain 1--2--3 no comparisons between the distributions from \eqref{jeden}--\eqref{szesc} is possible. Therefore to identify all the parameters we plug the densities from \eqref{jeden}--\eqref{szesc} directly into the identity \eqref{eq_ind} with $m=1$ and $n=3$. In this fashion, we can see that all the unknown parameters in \eqref{jeden}--\eqref{szesc} are derived from a single collection $(a_i)_{i\in V}$ and a number $c$. Therefore, due to Theorem \ref{tw_og}, we can write that, for any $r\in V$,
$$
X_{r,(r)}\sim G (a_r, cc_r),\;X_{i,(r)}\sim K\left(a_i,a_{\mathfrak{p}_r(i)}-a_i,cc_i, \frac{c_{\mathfrak{p}_r(i)}, i}{c_{\mathfrak{p}_r(i)}}\right) \textit{ for } i\in V\setminus \{r\}
$$ where $c>0$, $a_i>0$, $i\in V$.
\end{proof}

Again, as in \cite{MW04}, we give two examples to illustrate Theorem \ref{tw_char}.

\begin{example}
Consider chain 1--2--3 and related maps $\Phi_1$, $\Phi_3$ with all $c_i$'s and $c_{i,j}$'s equal to $1$. Let ${\bf S}$ be a random vector valued in $(0,\infty)^3$. Assume that the components of
$$\Phi_1({\bf S})=\left(S_1(1+S_2(1+S_3)),\,S_2(1+S_3),\,S_3\right)$$
are independent and that the components of
$$\Phi_3({\bf S})=\left(S_1,\,S_2(1+S_1),\,S_3(1+S_2(1+S_1)\right)$$
are also independent. Then from Theorem~\ref{tw_char}, the density of ${\bf S}$ is of the form
$$
f({\bf s})\propto s_1^{a_1-1}s_2^{a_2-1}s_3^{a_3-1}\,e^{-c(s_1+s_2+s_3+s_1s_2+s_2s_3+s_1s_2s_3)}\mathbbm{1}_{(0;\infty)^3}(\mathbf{s}).
$$
\end{example}

\begin{example}
Consider a ``three petal daisy,'' that is the tree with vertices $V=\{1,2,3,4\}$ and edges $E=\{$1--4, 2--4, 3--4$\}$ and related maps $\Phi_1$, $\Phi_2$, $\Phi_3$ with all $c_i$'s and $c_{i,j}$'s equal to 1. Let ${\bf S}$ be a random vector valued in $(0,\infty)^4$ with sufficiently smooth density. Assume that the components of
$$
\Phi_1({\bf S})=\left(S_1(1+S_4(1+S_2)(1+S_3)),\,S_2,\,S_3,\,S_4(1+S_2)(1+S_3)\right)
$$
are independent, the components of
$$
\Phi_2({\bf S})=\left(S_1,\,S_2(1+S_4(1+S_1)(1+S_3)),\,S_3,\,S_4(1+S_1)(1+S_3)\right)
$$
are independent, and the components of
$$
\Phi_3({\bf S})=\left(S_1,\,S_2,\,S_3(1+S_4(1+S_1)(1+S_2),\,S_4(1+S_1)(1+S_2)\right)
$$
are also independent. Then, from Theorem~\ref{tw_char}, the density of ${\bf S}$ is of the form
\begin{multline*}
f({\bf s})\propto s_1^{a_1-1}s_2^{a_2-1}s_3^{a_3-1}s_4^{a_4-1} \\
\times \,e^{-c(s_1+s_2+s_3+s_4+s_1s_4+s_2s_4+s_3s_4+ s_1s_2s_4+s_1s_3s_4+s_2s_3s_4+s_1s_2s_3s_4)}\mathbbm{1}_{(0;\infty)^4}(\mathbf{s}).
\end{multline*}
\end{example}

\section{Concluding remarks}
\label{sec:5}

In closing, we would like to remark that recently the approach of \cite{MW04} to the MY property on trees was extended to the matrix-variate case in \cite{Bo15}. To a large extent, this work was possible due to existing characterizations of the Wishart (the matrix analogue of the Gamma distribution) and the matrix GIG distributions based on the matrix version of MY property; see, e.g., \cite{Ko15, LW00, MW06, We02b}. In the case of Kummer and Gamma distributions, a matrix version of the independence property from \cite{KV12} was obtained in \cite{Kou12} and related characterization remains an open problem. A matrix version of the property and an extension of the characterization result from Section~\ref{sec:2} are currently under study.

It would also be interesting to know whether the property from \cite{HV15} can be embedded in the context of stochastic processes as was the case for the original MY property. As mentioned before, this was done in \cite{MY01} and \cite{MY03}. Similarly, the MY property on trees was related to the hitting times of the Brownian motion and  conditional structures of the geometric Brownian motion in \cite{WW07} and \cite{MWW09}. Therefore it is also of some interest to determine whether the tree version of the property from \cite{HV15} discussed here has also connections to stochastic processes.

\vspace{3mm}
{\bf Acknowledgement.} The authors are greatly indebted to Christian Genest and  Pawe{\l} Hitczenko for their considerable help regarding presentation of the material in this paper.

\vspace{2mm}

\end{document}